\newtheorem{theorem}{Theorem}
\theoremstyle{plain}
\newtheorem{corollary}{Corollary}
\newtheorem{example}{Example}
\newtheorem{lemma}{Lemma}
\newtheorem{remark}{Remark}
\numberwithin{equation}{section}
\begin{document}
\title{The Classification of Generalized Riemann Derivatives}
\author{J. Marshall Ash}
\address{Department of Mathematics, DePaul University\\
Chicago, IL 60614}
\email{mash@condor.depaul.edu}
\urladdr{http://www.depaul.edu/\symbol{126}mash/}
\author{Stefan Catoiu}
\email{scatoiu@condor.depaul.edu}
\urladdr{http://www.depaul.edu/\symbol{126}scatoiu/}
\author{William Chin}
\email{wchin@condor.depaul.edu}
\urladdr{http://www.depaul.edu/\symbol{126}wchin/}
\thanks{This paper is in final form and no
version of it will be submitted for publication elsewhere.}
\date{v22: December 16, 2014}
\subjclass{Primary 26A24; Secondary 26A27, 16S34}
\keywords{derivatives, generalized derivatives, Riemann
derivatives, $\mathcal{A}$-derivatives, generalized Riemann derivatives, even and odd differences, group algebras}

\begin{abstract}
We characterize all pairs $(\Delta_{\mathcal{A}},\Delta_{\mathcal{B}})$ of generalized Riemann differences for which $\mathcal{A}$-differentiability implies $\mathcal{B}$-differentiability.
Two generalized Riemann derivatives $\mathcal{A}$ and $\mathcal{B}$ are equivalent if a function has a derivative in the sense of $\mathcal{A}$ at a real number $x$ if and only if it has a derivative in the sense of $\mathcal{B}$ at $x$. We determine the equivalence classes for this equivalence relation.
\end{abstract}

\maketitle


\section{Introduction}

\subsection{Motivation}
The $n$th Riemann difference of a function $f$ is the difference 
\begin{equation*}
\Delta_{n}f(x,h)=\sum_{k=0}^{n}(-1)^{k}\binom{n}{k}f(x+\left( n-k\right) h),
\end{equation*}%
and the $n$th symmetric Riemann difference of $f$ is 
\begin{equation*}
\Delta_{n}^{s}f(x,h)=\sum_{k=0}^{n}(-1)^{k}\binom{n}{k}f(x+(\frac{n}{2}-k)h).
\end{equation*}%
The function $f$ is $n$ times Riemann (resp. symmetric Riemann)
differentiable at $x$ if the limit $\displaystyle R_{n}f(x)=\lim_{h%
\rightarrow 0}\frac{\Delta_{n}f(x,h)}{h^{n}}\text{ (resp. $R_{n}^{s}f(x)=\lim_{h%
\rightarrow 0}\frac{\Delta_{n}^{s}f(x,h)}{h^{n}}$)}$ exists as a finite number.
If $f$ is $n$ times differentiable at $x$, one can use its $n$th Taylor polynomial about $x$ to see that $f$ is $n$
times Riemann and symmetric Riemann differentiable at $x$ and $%
R_{n}f(x)=R_{n}^{s}f(x)=f^{\left( n\right) }(x)$. With the exception of the $n=1$ forward Riemann case, where the definition of Riemann differentiation is the same as the one for ordinary differentiation, the converse of this is in general false. If $n\geq 2$, the function
\[ f(x)=\begin{cases}
0, &\text{if }x\in \mathbb{Q}\\
x, &\text{if }x\notin \mathbb{Q}
\end{cases}
\]
is $n$ times Riemann differentiable at $x=0$, without being first order differentiable at zero.
In the symmetric Riemann case, every discontinuous at zero odd function is symmetric Riemann differentiable at zero of all even orders without necessarily being differentiable at zero of any order, and every discontinuous at zero even function is symmetric Riemann differentiable at zero of all odd orders without necessarily being differentiable at zero of any order. Moreover,  symmetric Riemann differentiability of a certain order does not imply  symmetric Riemann differentiability of a different order. For example, at $x=0$ the even function
\[
f(x)=\begin{cases}
0, & \text{if } x\notin \mathbf{Q}\\
1, & \text{if } x\in \mathbf{Q}
\end{cases}
\]
is symmetric Riemann differentiable of all odd orders, and it is not symmetric Riemann differentiable of any even order. This example might lead one to guess that,
in the symmetric Riemann case, higher order differentiability implies lower order differentiability when the two orders have the same parity. This is disproved in greater generality in Section 6.1, Corollary 2.

Riemann derivatives were generalized in \cite{As}. A generalized $n$th
Riemann difference of a function $f$ is a difference of the form 
\begin{equation}
\Delta _{\mathcal{A}}f(x,h)=\sum_{i=1}^{m}A_{i}f(x+a_{i}h),
\label{GRD}
\end{equation}
where $\mathcal{A}=\{A_{1},\ldots ,A_{m};a_{1},\ldots ,a_{m}\}$ is a set of $2m$
parameters, with $a_{1},\ldots ,a_{m}$ distinct, whose elements satisfy the
Vandermonde conditions $\sum_{i=1}^{m}A_{i}a_{i}^{j}=n!\cdot \delta _{jn}$,
for $j=0,1,\ldots ,n$. By linear algebra, the excess number $e=m-\left( n+1\right) $ is non-negative. Some
interesting examples of $\mathcal{A}$-derivatives with positive excess appear in
numerical analysis; see \cite{AJ} and \cite{AJJ}. The \textit{$\mathcal{A}$-derivative} of $f$
is defined by the limit 
\begin{equation*}
D_{\mathcal{A}}f(x)=\lim_{h\rightarrow 0}\frac{\Delta _{\mathcal{A}}f(x,h)}{%
h^{n}}.
\end{equation*}%
For any $\mathcal{A}$, the derivative associated with $\mathcal{A}$ is a generalized Riemann derivative. Conversely, any generalized Riemann derivative is an $\mathcal{A}$-derivative for some $\mathcal{A}$. For simplicity, we will use the notation $\mathcal{A}$-derivative to mean both a particular and a generalized Riemann derivative. Context will always make clear which is meant. We have found this abuse of notation convenient. (E.g., the $\mathcal{A}$-derivatives $\mathcal{B}=\{1,-1;1,0\}$ and $\mathcal{C}=\{1,-1;1/2,-1/2\}$ are inequivalent, since $|x|$ has a $\mathcal{C}$-derivative at $x=0$, but not a $\mathcal{B}$-derivative there.)

Most of the results for classical Riemann derivatives hold true for $%
\mathcal{A}$-derivatives of differentiable functions $f$. For example, it is true that
\begin{equation}
\text{ordinary $n$th derivative exists at $x\Longrightarrow $ every $n$th $\mathcal{A}
$-derivative exists at $x$}  \label{imp}
\end{equation}%
and, as seen before, the converse of this is in general false for each $n$.

Our main motivation is the the following theorem of \cite{ACC} that, in the particular case of $n=1$, classifies all $\mathcal{A}$-derivatives for which the converse of (\ref{imp}) is true.

\begin{theorem}
\begin{description}
\item[A] The first order $\mathcal{A}$-derivatives which are dilates ($%
h\rightarrow sh$, for some $s\neq 0$) of%
\begin{equation}
\lim_{h\rightarrow 0}\frac{Af(x+rh)+Af(x-rh)-2Af(x)+f(x+h)-f(x-h)}{2h},
\label{Ar der}
\end{equation}%
where $Ar\neq 0$ are equivalent to ordinary differentiation.

\item[B] Given any other $\mathcal{A}$-derivative of any order $n=1,2,\dots $%
, there is a (Lebesgue) measurable function $f\left( x\right) $ such that $D_{\mathcal{A%
}}f\left( 0\right) $ exists, but the $n$th (Peano) derivative $f_{n}\left( 0\right) $
does not.
\end{description}
\end{theorem}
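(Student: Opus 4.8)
The plan is to treat the two parts separately. For Part A one inclusion is immediate from (\ref{imp}): ordinary differentiability at $x$ forces every $\mathcal{A}$-derivative, in particular the one in (\ref{Ar der}) and its dilates. For the reverse inclusion I would use an even--odd decomposition about $x$. Writing $f(x+t)-f(x)=E(t)+O(t)$ with $E(t)=\tfrac12(f(x+t)+f(x-t))-f(x)$ even and $O(t)=\tfrac12(f(x+t)-f(x-t))$ odd, the difference quotient in (\ref{Ar der}) equals $(A\,E(rh)+O(h))/h$. Denoting this quotient by $q(h)$ and using $E(-t)=E(t)$, $O(-t)=-O(t)$, the quotient at the argument $-h$ is $(-A\,E(rh)+O(h))/h$. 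From the existence of the two-sided limit $L$, adding these gives $O(h)/h\to L$ and subtracting gives $A\,E(rh)/h\to0$; since $Ar\neq0$ the latter yields $E(t)/t\to0$. Then $\frac{f(x+t)-f(x)}{t}=\frac{E(t)}{t}+\frac{O(t)}{t}\to L$, so $f'(x)=L$. A dilation $h\mapsto sh$ does not affect the existence of this limit, so the whole family is equivalent to ordinary differentiation.

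For Part B, by translation invariance I may take $x=0$; the task is then to construct, for every $\mathcal{A}$-derivative not covered by Part A, a measurable $f$ with $D_{\mathcal{A}}f(0)$ existing but $f_n(0)$ absent. The orders $n\ge2$ admit a single uniform construction. Let $G$ be the subfield of $\mathbb{R}$ generated by the base points $a_1,\dots,a_m$; it is countable, and since each $a_i\neq0$ lies in $G$ one has $a_ih\in G\iff h\in G$. I would take $f(x)=x\cdot\mathbf{1}_{\mathbb{R}\setminus G}(x)$. Because $f=x$ off a null set and the Vandermonde condition $\sum_iA_ia_i=0$ holds for $n\ge2$, a direct check gives $\Delta_{\mathcal{A}}f(0,h)\equiv0$ (separately for $h\in G$ and $h\notin G$), so $D_{\mathcal{A}}f(0)=0$; yet $f(h)/h=\mathbf{1}_{\mathbb{R}\setminus G}(h)$ has no limit as $h\to0$, so $f$ is not differentiable and a fortiori $f_n(0)$ does not exist.

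The order $n=1$ is where the classification has content, and I would organize the counterexamples around the even and odd symbols $\Phi_e(\omega)=\sum_iA_i|a_i|^{1+i\omega}$ and $\Phi_o(\omega)=\sum_iA_i\,\mathrm{sign}(a_i)\,|a_i|^{1+i\omega}$, obtained by testing $\Delta_{\mathcal{A}}$ against the homogeneous functions $|x|^{1+i\omega}$ and $\mathrm{sign}(x)|x|^{1+i\omega}$, for which $\Delta_{\mathcal{A}}f(0,h)=|h|^{1+i\omega}\Phi_{e}(\omega)$, respectively $\mathrm{sign}(h)|h|^{1+i\omega}\Phi_{o}(\omega)$. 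If either symbol vanishes at some real $\omega_0$, then the real part of the corresponding homogeneous function --- for instance $|x|\cos(\omega_0\log|x|)$ --- has a vanishing $\mathcal{A}$-difference at $0$ (so $D_{\mathcal{A}}f(0)=0$) while $f(h)/h$ oscillates, producing a counterexample; the case $\omega_0=0$ recovers the familiar even non-differentiable example $|x|$. A direct computation shows that for the family of Part A one has $\Phi_o\equiv1$ and $\Phi_e(\omega)=A|r|^{1+i\omega}$, both zero-free, consistent with the absence of counterexamples there.

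When both symbols are zero-free --- which can occur for non-special $\mathcal{A}$, as for the one-sided difference $\frac{f(2h)-f(h)}{h}$, whose symbols are $-1+2\cdot2^{i\omega}$ --- the homogeneous functions no longer suffice, and I would instead use a function supported on a sparse geometric orbit. For that example $f=\lambda\,\mathbf{1}_{S}$ with $S=\{h_0 2^{-k}:k\ge0\}$ satisfies $\Delta_{\mathcal{A}}f(0,h)\equiv0$ for small $h$ by exact cancellation ($-f(h)+f(2h)=-\lambda+\lambda$ on the orbit, and $0$ off it), while $f$ is discontinuous at $0$; analogous orbit functions are intended to cover the remaining commensurable configurations. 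The main obstacle is the completeness of the classification: proving that one of these two mechanisms --- a real zero of $\Phi_e$ or $\Phi_o$, or a cancelling sparse-support function --- is available for every first-order $\mathcal{A}$ outside the family (\ref{Ar der}), equivalently that simultaneous non-vanishing of both symbols together with the failure of every such cancellation forces $\mathcal{A}$ to be a dilate of (\ref{Ar der}). This is the delicate step, requiring a case analysis on the nodes and coefficients governed by the two exponential-sum symbols, and it is the heart of the theorem.
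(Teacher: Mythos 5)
Your Part A argument is correct and complete, and it is more elementary than what the paper does: the splitting $f(x+t)-f(x)=E(t)+O(t)$, comparing the quotient at $h$ and at $-h$ to extract $O(h)/h\to L$ and $E(rh)/h\to 0$ (using $Ar\neq 0$), is a clean direct proof of the hard direction of Part A, with (\ref{imp}) giving the converse; the paper instead obtains Part A as the special case of its Theorem 2 where $\mathcal{B}$ is the forward difference, via the group-algebra translation. Your Part B construction for orders $n\ge 2$ is also valid and is in the spirit of the paper's own Example 2: since the Vandermonde condition gives $\sum_i A_i a_i=0$ when $n\ge 2$, the function $x\cdot\mathbf{1}_{\mathbb{R}\setminus G}$ has identically vanishing $\mathcal{A}$-difference at $0$, is measurable because the subfield $G$ is countable, and has no first (hence no $n$th) Peano derivative at $0$.

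The genuine gap is Part B for $n=1$, and it is precisely the step you defer as ``the delicate step'': showing that \emph{every} first-order $\mathcal{A}$ outside the family (\ref{Ar der}) admits a measurable counterexample. Neither of your two mechanisms is shown to be exhaustive, and the symbol mechanism provably is not: your own example $\bigl(f(2h)-f(h)\bigr)/h$ has zero-free symbols $\Phi_e,\Phi_o$ yet is not equivalent to ordinary differentiation, so real zeros of the symbols cannot detect all exceptional $\mathcal{A}$'s. The sparse-orbit cancellation is exhibited only for that one difference, whose nodes generate a cyclic multiplicative group; for nodes generating a dense group of rank at least two there is no geometric orbit stable under all the dilations involved, and you give no construction there. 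Establishing the dichotomy you need is essentially the theorem itself. The paper's machinery closes exactly this hole, uniformly and with no case analysis: encode $\Delta_{\mathcal{A}}$ as $\alpha\in k\mathbb{R}^{\times}$; equivalence to ordinary differentiation becomes $(\alpha)=\mathbf{A}$, i.e.\ $\alpha$ is a unit; since $\mathbb{R}^{\times}=\langle -1\rangle\times\mathbb{R}^{+}$ with $\mathbb{R}^{+}$ torsion-free, the only units are $Ae_r+Bd_s$ with $A,B\neq 0$ (property (AB7), resting on Passman's theorem that group algebras of torsion-free abelian groups have only trivial units), and these are exactly the dilates of (\ref{Ar der}); finally, whenever $(\alpha)$ is a proper ideal, the Hamel-basis functional construction in the proof of Theorem 5, refined in Corollary 1 so that the resulting function is supported in a countable set and hence measurable, produces $f$ with $D_{\mathcal{A}}f(0)=0$ but $f'(0)$ nonexistent. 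To complete your plan you would need either that uniform construction or an equivalent of the triviality-of-units theorem; the two ad hoc mechanisms you propose do not substitute for it.
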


\begin{remark}
Part A of the above theorem displays some $\mathcal{A}$-derivatives
equivalent to the ordinary first order derivative. Part B asserts that no
first order $\mathcal{A}$-derivative that is not mentioned in Part A is equivalent to
ordinary first order differentiation and also that no higher order $%
\mathcal{A}$-derivative is equivalent to ordinary differentiation of the
same order. This makes Theorem 1 the best possible result with regard to
reversing the implication in (\ref{imp}). On the other hand, since the first
forward Riemann derivative is the same as the first ordinary derivative,
Theorem 1 classifies all $\mathcal{A}$-derivatives of order $1$ that are
equivalent to the Riemann derivative $R_{1}f(x)=\lim_{h\rightarrow 0}\frac{%
f(x+h)-f(x)}{h}$. This leads to the following more general problem: given any generalized Riemann derivative
$\mathcal{B}$, determine all generalized Riemann derivatives $\mathcal{A}$
such that 
\begin{equation}
\text{$\mathcal{B}$-derivative exists at $x\Longleftrightarrow $
$\mathcal{A}$-derivative exists at $x$.}  \label{A-B}
\end{equation}%
This is the main goal of the present work.
\end{remark}

\begin{remark}  The terms of the numerator in (\ref{Ar der}) fall into two categories: the sum of the
first three terms is a scalar multiple of an $r$-dilate of the even
difference $f(x+h)+f(x-h)-2f(x)$, and the remaining two terms add up to the odd
difference $f(x+h)-f(x-h)$. This motivates us to expect that a
classification of $\mathcal{A}$-derivatives given by the equivalence (\ref%
{A-B}) will be stated in terms of dilates, even differences, and odd
differences.
\end{remark} 

\subsection{Even and odd differences} A (not necessarily generalized Riemann) difference
\[
\Delta_{\mathcal{A}}f(x,h)=\sum A_if(x+a_ih)
\]
(where $\mathcal{A}=\{A_1,\ldots ,A_m;a_1,\ldots ,a_m\}$ is a set of $2m$ parameters with $a_1,\ldots ,a_m$ distinct)
is \textit{even} if $\Delta_{\mathcal{A}}f(x,-h)=\Delta_{\mathcal{A}}f(x,h)$ and it is \textit{odd} if $\Delta_{\mathcal{A}}f(x,-h)=-\Delta_{\mathcal{A}}f(x,h)$. For example, the symmetric Riemann difference $\Delta_nf(x,h)$ is even when $n$ is even and it is odd when $n$ is odd, while $\Delta_{\mathcal{A}}f(x,h)=2f(x+h)+f(x-h)-3f(x)$ is neither even nor odd.

Each difference $\Delta_{\mathcal{A}}$ gives rise to an even difference $\Delta_{\mathcal{A}}^{ev}$ and an odd difference $\Delta_{\mathcal{A}}^{odd}$, defined as
\begin{equation}
\begin{aligned}
\Delta_{\mathcal{A}}^{ev}f(x,h)&=\frac {\Delta_{\mathcal{A}}f(x,h)+\Delta_{\mathcal{A}}f(x,-h)}2=\sum_iA_i\frac { f(x+a_ih)+f(x-a_ih)}2  ,\\
\Delta_{\mathcal{A}}^{odd}f(x,h)&=\frac {\Delta_{\mathcal{A}}f(x,h)-\Delta_{\mathcal{A}}f(x,-h)}2=\sum_iA_i\frac { f(x+a_ih)-f(x-a_ih)}2 .
\end{aligned}
\label{evenalignedodd}
\end{equation}
The difference $\Delta_{\mathcal{A}}$ is even if and only if $\Delta_{\mathcal{A}}^{ev}=\Delta_{\mathcal{A}}$, and it is odd if and only if $\Delta_{\mathcal{A}}^{odd}=\Delta_{\mathcal{A}}$.
In addition, we have
\begin{equation}
\Delta_{\mathcal{A}}f(x,h)=\Delta_{\mathcal{A}}^{ev}f(x,h)+\Delta_{\mathcal{A}}^{odd}f(x,h).
\label{even-odd}
\end{equation}

Conversely, whenever $\Delta_{\mathcal{A}}$ is written as a sum $\Delta_{\mathcal{A}}=\Delta_{\mathcal{B}}+\Delta_{\mathcal{C}}$
of an even difference $\Delta_{\mathcal{B}}$ and an odd difference $\Delta_{\mathcal{C}}$, we must have
$\Delta_{\mathcal{B}}=\Delta_{\mathcal{A}}^{ev}$ and $\Delta_{\mathcal{C}}=\Delta_{\mathcal{A}}^{odd}$.
Relation (\ref{even-odd}) is therefore the unique writing of $\Delta_{\mathcal{A}}$ as a sum of an even difference and an odd difference, the \textit{components} of $\Delta_{\mathcal{A}}$.

\subsection{Results}

Our main result is the classification of all $\mathcal{A}$-derivatives given by the equivalence of generalized Riemann differentiation of (\ref{A-B}). Its statement can be written in a compact form by correlating parity of the order of the derivative and the parity of the component differences. For this, we define two maps
\[
\epsilon =\epsilon (n)=\begin{cases}
ev,& \text{if $n$ even}\\
odd, & \text{if $n$ odd}
\end{cases}
\text{ and }
\epsilon '=\epsilon '(n)=\begin{cases}
odd,& \text{if $n$ even}\\
ev, & \text{if $n$ odd}.
\end{cases}
\]
Given an $n$th generalized Riemann difference $\Delta _{\mathcal{B}}f(x,h)$, by the Vandermonde conditions, the difference
${\Delta _{\mathcal{B}}f(x,sh)}/{s^n}$ is the only scalar multiple of its $s$-dilate that is also an $n$th generalized Riemann difference. We call this a \textit{scaling} of $\Delta _{\mathcal{B}}f(x,h)$.
We have found the following complete classification of $\mathcal{A}$-derivatives.

\begin{theorem}
\label{2}Let $\mathcal{A}$ and $\mathcal{B}$ be generalized Riemann derivatives of orders $m$ and $n$. Then $%
\mathcal{B}$ is equivalent to $\mathcal{A}$ if and only if $m=n$ and there are non-zero constants $A,r$, and $s$ so that 
\begin{equation}
\frac{\Delta _{\mathcal{B}}f\left( x,h\right) }{h^{n}}=\frac{\Delta _{%
\mathcal{A}}^{\epsilon }f(x,sh)+A\Delta _{\mathcal{A}}^{\epsilon ^{\prime
}}f(x, rh )}{\left( sh\right) ^{n}}.  \label{Ar der1}
\end{equation}%
This means $\Delta _{\mathcal{B}}^{\epsilon }f(x,h)$ is a scaling of $\Delta _{\mathcal{A}}^{\epsilon }f(x,h)$ and $\Delta _{\mathcal{B}%
}^{\epsilon ^{\prime }}f(x,h)$ is a non-zero scalar multiple of a dilate of $%
\Delta _{\mathcal{A}}^{\epsilon ^{\prime }}f(x,h)$.
\end{theorem}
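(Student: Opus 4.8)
The plan is to prove the "if" direction first, since it is essentially a verification, and then attack the substantive "only if" direction. For the "if" direction, suppose $\Delta_{\mathcal{B}}f(x,h)/h^n$ has the form on the right of (\ref{Ar der1}). Because $\Delta_{\mathcal{A}}^{\epsilon}$ and $\Delta_{\mathcal{A}}^{\epsilon'}$ are the even/odd components of $\Delta_{\mathcal{A}}$, and dilation commutes with taking limits, I would show directly that the limit defining $D_{\mathcal{B}}f(x)$ exists if and only if both component limits exist, which in turn is controlled by the existence of $D_{\mathcal{A}}f(x)$. The key observation is that a function is $\mathcal{A}$-differentiable at $x$ if and only if \emph{both} its even component difference $\Delta_{\mathcal{A}}^{ev}f(x,h)/h^n$ and its odd component difference $\Delta_{\mathcal{A}}^{odd}f(x,h)/h^n$ have limits as $h\to 0$: indeed, by (\ref{even-odd}) the sum of the two components is $\Delta_{\mathcal{A}}$, while replacing $h$ by $-h$ and using parity shows the difference of the two (dilated) limits is also controlled, so the existence of one combined limit forces the existence of both separately. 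This even/odd splitting lemma is the technical heart that makes the whole parity bookkeeping work, so I would isolate and prove it first.

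With the splitting lemma in hand, equivalence of $\mathcal{A}$ and $\mathcal{B}$ reduces to the separate equivalence of their even components and of their odd components. The plan for the "only if" direction is therefore to show: if $\mathcal{A}$-differentiability implies $\mathcal{B}$-differentiability and conversely, then $\Delta_{\mathcal{B}}^{\epsilon}$ and $\Delta_{\mathcal{A}}^{\epsilon}$ must be scalings of one another and $\Delta_{\mathcal{B}}^{\epsilon'}$ and $\Delta_{\mathcal{A}}^{\epsilon'}$ must be dilates (up to scalar) of one another. The distinction in the conclusion—why $\epsilon$ gets the rigid "scaling" relation (forced scalar $1/s^n$) while $\epsilon'$ only gets "scalar multiple of a dilate" with a free constant $A$—comes from the Vandermonde normalization: the $\epsilon$-component carries the leading order $n$ term responsible for the value of the derivative, so its scalar is pinned down, whereas the $\epsilon'$-component is of lower effective order and its coefficient is free. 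I would make this precise by analyzing which component contributes the $h^n$ term and which is forced to vanish to higher order, matching the two sides of (\ref{Ar der1}).

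To extract the rigidity, the main tool I anticipate is the construction of explicit measurable "test functions" that are $\mathcal{A}$-differentiable but fail to be $\mathcal{B}$-differentiable whenever the asserted relation between the component differences fails—this is exactly the mechanism already used in Part B of Theorem 1 and in the $\mathcal{B},\mathcal{C}$ example of the introduction. The strategy is contrapositive: assuming the even components are \emph{not} scalings (or the odd components are not scalar dilates) of each other, I would build a function exhibiting the appropriate pathology, likely adapting the odd/even discontinuous examples (such as the indicator-type functions on $\mathbb{Q}$) that are already shown to be differentiable in one generalized Riemann sense but not another. Equality of the orders $m=n$ would be forced at the outset by a homogeneity/scaling argument on $\lambda$-dilations of a single test function, since differences of genuinely different orders respond to $h\mapsto \lambda h$ at incompatible rates.

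The hardest part will be the rigidity within a fixed parity class: given that two even (respectively odd) generalized Riemann differences of the same order define equivalent differentiability notions, showing that one must be a constant times a dilate of the other. This is where the algebraic structure hinted at by the "group algebras" keyword should enter. I expect to encode each difference as an element of a group algebra (over the additive group of dilation nodes $a_i$, or a polynomial/Laurent-type algebra in the shift), translate equivalence of derivatives into an algebraic divisibility or associate relation there, and then invoke a factorization/gcd argument to conclude the two encoded elements differ only by a unit, which corresponds precisely to a scalar-times-dilate. Converting the analytic statement "$\mathcal{A}$-differentiability implies $\mathcal{B}$-differentiability" into this clean algebraic divisibility—and verifying that the units of the relevant algebra are exactly the scalars-times-dilates—is the step I expect to require the most care.
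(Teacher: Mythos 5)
Your overall architecture is the paper's: encode each difference as an element $\alpha=\sum_i A_i x_{a_i}$ of the group algebra $k\mathbb{R}^{\times}$, translate ``$\mathcal{A}$-differentiability implies $\mathcal{B}$-differentiability'' into the ideal inclusion $(\alpha)\supseteq(\beta)$, note that equivalence then forces $\beta=u\alpha$ for a unit $u$, and classify the units as $Ae_r+Bd_s$ via the idempotents $e=\tfrac12(1+x_{-1})$, $d=\tfrac12(1-x_{-1})$ and the triviality of units in group algebras of torsion-free abelian groups. Your even/odd splitting lemma is correct and is the analytic shadow of the decomposition $\mathbf{A}=e\mathbf{A}\oplus d\mathbf{A}$. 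One caution there: knowing that the \emph{conjunction} of the two component limits for $\mathcal{A}$ is equivalent to the conjunction for $\mathcal{B}$ does not by itself give the separate componentwise equivalences you need; you must add the observation that $\Delta^{ev}_{\mathcal{A}}f(x,h)=\Delta_{\mathcal{A}}f_{ev}(x,h)$ (where $f_{ev}$ is the even part of $f$ about $x$), so that one can probe each component with purely even or purely odd test functions. The paper sidesteps this by doing the separation algebraically, through ideal theory in the direct sum.

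Two of your anticipated mechanisms would not survive contact with the details. First, the counterexample construction that powers the ``only if'' direction is not an adaptation of indicator functions on $\mathbb{Q}$. Given $\beta\notin(\alpha)$, the paper chooses coset representatives $s_n\to 0$ of the countable subgroup generated by the nodes, proves the dilates $\beta_{s_n}$ are linearly independent modulo $(\alpha)$ by a disjoint-support argument, and then defines $f$ as a linear functional on a basis (obtained from the axiom of choice) of the $2^{\aleph_0}$-dimensional space $V=\sum_r kx_r$ that annihilates $(\alpha)$ and sends each $\beta_{s_n}$ to $1$; the resulting $f$ is in general non-measurable, and securing measurability requires a further restriction to a countable subfield (the paper's Corollary 1). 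Second, your plan to force $m=n$ ``by a homogeneity/scaling argument on $\lambda$-dilations'' cannot work, because the existence of $\lim_{h\to 0}\Delta f(x,h)/h^n$ is invariant under $h\mapsto\lambda h$; differences of different orders do not betray themselves under dilation. The paper needs two separate constructions: $m>n$ is ruled out by the explicit function equal to $x^m$ on a countable subfield and $0$ elsewhere, and $m<n$ by a Vandermonde obstruction ($\sum A_i a_i^m=0$ versus $\sum B_i b_i^m=m!$) showing $\Delta_{\mathcal{B}}$ is not a combination of dilates of $\Delta_{\mathcal{A}}$, fed into the same functional construction. With those repairs your plan coincides with the published proof.
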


\begin{example}
\label{e:2}(i) Let $\Delta _{\mathcal{A}}=\Delta _{n}^{s}$ be the $n$th
symmetric Riemann difference. Theorem 2 says that the only generalized
derivatives that are equivalent to the $n$th symmetric Riemann derivative $%
R_{n}^{s}$ are the non-zero scalings of it.

(ii) By taking $\Delta _{\mathcal{A}}f(x,h)=\Delta _{1}f(x,h)=f(x+h)-f(x)$,
the first (forward) Riemann difference, the computation 
\begin{equation*}
\begin{aligned} \Delta_{\mathcal{A}}^{\epsilon
}f(x,h)=\Delta_{\mathcal{A}}^{odd }f(x,h)&=\frac
{[f(x+h)-f(x)]-[f(x-h)-f(x)]}2\\ &=\frac {f(x+h)-f(x-h)}2\\
\Delta_{\mathcal{A}}^{\epsilon '}f(x,h)=\Delta_{\mathcal{A}}^{ev
}f(x,h)&=\frac {[f(x+h)-f(x)]+[f(x-h)-f(x)]}2\\ &=\frac
{f(x+h)-2f(x)+f(x-h)}2 \end{aligned}
\end{equation*}%
shows that the classification in Theorem 2 is a generalization of the
classification in Theorem 1. 

(iii) The first order $\mathcal{A}$-derivative with excess $e=1$ given by%
\begin{equation*}
f_{\ast }\left( x\right) =\lim_{h\rightarrow 0}\frac{\left( \frac{1}{2}-\tau
\right) f\left( x+\left( \tau +1\right) h\right) +2\tau f\left( x+\tau
h\right) -\left( \frac{1}{2}+\tau \right) f\left( x+\left( \tau -1\right)
h\right) }{h},
\end{equation*}
where $\tau =1/\sqrt{3}$, arises in numerical analysis; see \cite{AJ}. A very
simple calculation done at the end of this paper using a convenient group algebra notation
developed in Section 5 shows that Theorem \ref{2} asserts that the most general
first order $A$-derivative equivalent to $f_{\ast }\left( x\right) $ is%
\begin{equation*}
\lim_{h\rightarrow 0}\frac{\Delta ^{odd}f\left(x, sh\right) }{sh}+A\frac{\Delta
^{ev}f\left(x, rh\right) }{h}
\end{equation*}%
where 
\begin{eqnarray*}
\Delta ^{odd}f\left(x, h\right)  &=\left( \frac{1}{2}-\tau \right) \frac{f\left( x+\left( \tau +1\right) h\right) -f\left( x-\left( \tau +1\right)h\right) }{2}+2\tau \frac{f\left( x+\tau h\right) -f\left( x-\tau h\right) }{2}\\
&-\left( \frac{1}{2}+\tau \right) \frac{f\left( x+\left( \tau -1\right)
h\right) -f\left( x-\left( \tau -1\right) h\right) }{2},
\end{eqnarray*}%
\begin{eqnarray*}
\Delta ^{ev}f\left(x, h\right)  &=\left( \frac{1}{2}-\tau \right) \frac{f\left( x+\left( \tau +1\right) h\right) +f\left( x-\left( \tau +1\right)h\right) }{2}+2\tau \frac{f\left( x+\tau h\right) +f\left( x-\tau h\right) }{2} \\
&+\left( \frac{1}{2}+\tau \right) \frac{f\left( x+\left( \tau -1\right)
h\right) +f\left( x-\left( \tau -1\right) h\right) }{2},
\end{eqnarray*}%
and $s$, $r$, and $A$ are nonzero constants.
\end{example}

We actually have a much more general theorem. For a given generalized Riemann derivative $\mathcal{B}$, it classifies all generalized Riemann derivatives $\mathcal{A}$ such that
\begin{equation}
\text{$\mathcal{B}$-derivative exists at $x\Longrightarrow $ $\mathcal{A}$-derivative exists at $x$.}  \label{A->B}
\end{equation}
The result is as follows:

\begin{theorem}
Let $\mathcal{A}$ and $\mathcal{B}$ be generalized Riemann derivatives of orders $m$ and $n$, respectively. Then $%
\mathcal{B}$-differentiation implies $\mathcal{A}$-differentiation if and only if $m=n$ and, for every function $f$,
$\Delta_{\mathcal{A}}^{\epsilon }f(x,h)$ and $\Delta_{\mathcal{A}}^{\epsilon '}f(x,h)$ are finite linear combinations
\[\begin{aligned}
\Delta_{\mathcal{A}}^{\epsilon }f(x,h)&=\sum_iU_i\Delta_{\mathcal{B}}^{\epsilon }f(x,u_ih)\\
\Delta_{\mathcal{A}}^{\epsilon '}f(x,h)&=\sum_iV_i\Delta_{\mathcal{B}}^{\epsilon '}f(x,v_ih)
\end{aligned}
\]
of non-zero
$u_i$-dilates of $\Delta_{\mathcal{B}}^{\epsilon }f(x,h)$ and $v_i$-dilates of $\Delta_{\mathcal{B}}^{\epsilon '}f(x,h)$.
\end{theorem}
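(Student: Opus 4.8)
The plan is to collapse $\mathcal{B}$- and $\mathcal{A}$-differentiability into independent conditions on their even and odd components, and then to recognize the implication (\ref{A->B}) as a span-of-dilates membership statement for each component separately.

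First I would record the elementary observation that drives everything. Replacing $h$ by $-h$ in $\Delta_{\mathcal{B}}f(x,h)/h^{n}$ and using $\Delta_{\mathcal{B}}^{ev}f(x,-h)=\Delta_{\mathcal{B}}^{ev}f(x,h)$, $\Delta_{\mathcal{B}}^{odd}f(x,-h)=-\Delta_{\mathcal{B}}^{odd}f(x,h)$ together with $(-h)^{n}=(-1)^{n}h^{n}$, one sees that $f$ is $\mathcal{B}$-differentiable at $x$ \emph{if and only if} $\Delta_{\mathcal{B}}^{\epsilon}f(x,h)/h^{n}$ converges, with limit $D_{\mathcal{B}}f(x)$, and $\Delta_{\mathcal{B}}^{\epsilon'}f(x,h)/h^{n}\to 0$, where $\epsilon=\epsilon(n)$: the two-sided limit forces the component of the ``wrong'' parity to vanish. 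The same holds for $\mathcal{A}$ with $\epsilon(m)$. This parity lemma decouples the even and odd worlds and is the engine of both directions.

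For the sufficiency (``if'') direction, assume $m=n$ and the two displayed identities. Since $\Delta_{\mathcal{B}}^{\epsilon}f(x,u_{i}h)/h^{n}=u_{i}^{n}\,\Delta_{\mathcal{B}}^{\epsilon}f(x,u_{i}h)/(u_{i}h)^{n}$ and $u_{i}h\to 0$, the parity lemma gives $\Delta_{\mathcal{A}}^{\epsilon}f(x,h)/h^{n}\to \big(\sum_{i}U_{i}u_{i}^{n}\big)D_{\mathcal{B}}f(x)$, a finite limit, while the identical computation with the $\epsilon'$-identity yields $\Delta_{\mathcal{A}}^{\epsilon'}f(x,h)/h^{n}\to \big(\sum_{i}V_{i}v_{i}^{n}\big)\cdot 0=0$. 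By the parity lemma applied to $\mathcal{A}$ (using $m=n$), $f$ is $\mathcal{A}$-differentiable. This direction is routine.

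For necessity, suppose $\mathcal{B}$-differentiability implies $\mathcal{A}$-differentiability. I would first force $m=n$ by testing against the homogeneous functions $|x|^{\alpha}$ (even) and $|x|^{\alpha}\,\mathrm{sgn}(x)$ (odd), $\alpha>0$ non-integer, for which every quotient is an explicit multiple $\Phi(\alpha)\,|h|^{\alpha}/h^{k}$ with $\Phi_{\mathcal{A}}(\alpha)=\sum_{i}A_{i}|a_{i}|^{\alpha}$; comparing the ranges of $\alpha$ on which the $\mathcal{B}$-quotient converges but the $\mathcal{A}$-quotient does not rules out $m\neq n$, the degenerate case where $\Phi_{\mathcal{A}}$ and $\Phi_{\mathcal{B}}$ are proportional (or vanish identically) being handled separately. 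Once $m=n$, the parity lemma lets me probe the two components by restricting to even and to odd test functions, reducing the implication to two assertions: $\mathcal{B}$-differentiability must force $\Delta_{\mathcal{A}}^{\epsilon}f/h^{n}$ to converge and $\Delta_{\mathcal{A}}^{\epsilon'}f/h^{n}\to 0$. The heart of the matter, and the step I expect to be the main obstacle, is the resulting completeness claim: an even (resp. odd) difference is controlled by $\mathcal{B}$-differentiability for every $f$ precisely when it lies in the span of dilates of the matching component of $\mathcal{B}$. Its nontrivial half I would prove by contraposition: assuming $\Delta_{\mathcal{A}}^{\epsilon}$ is not a finite linear combination of dilates of $\Delta_{\mathcal{B}}^{\epsilon}$, I would build a single $\mathcal{B}$-differentiable $f$ on a geometric sequence of scales $h_{k}\to 0$, placing a rescaled fixed profile on each scale with amplitudes chosen so that $\Delta_{\mathcal{B}}^{\epsilon}f/h^{n}$ stays $o(1)$ across scales (forcing $\mathcal{B}$-differentiability with derivative $0$) while $\Delta_{\mathcal{A}}^{\epsilon}f/h^{n}$ oscillates. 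The feasibility of this choice is exactly the failure of span membership; to make it precise I would pass to the group-algebra notation of Section 5, under which dilation is an algebra endomorphism and ``span of dilates'' becomes an explicit frequency-matching linear condition, and verifying that non-membership produces the oscillating scales is the crux. Theorem \ref{2} then follows by applying this implication both ways, the symmetric requirements collapsing the linear combinations to the single scalings and dilates recorded there.
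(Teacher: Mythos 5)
Your sufficiency argument and the ``parity lemma'' decoupling the even and odd components are correct, and they parallel the paper's framework (there the decoupling is done algebraically via the idempotents $e=\tfrac12(1+x_{-1})$ and $d=\tfrac12(1-x_{-1})$, so that $(\alpha)=(\alpha e)\oplus(\alpha d)$ and ideal containment splits componentwise). But the necessity direction has two genuine gaps. First, the homogeneous test functions $|x|^{\alpha}$ and $|x|^{\alpha}\mathrm{sgn}(x)$ do rule out $m>n$ (pick a non-integer $\alpha\in(n,m)$ where the relevant Dirichlet polynomial $\sum_iA_i|a_i|^{\alpha}$ or $\sum_iA_i|a_i|^{\alpha}\mathrm{sgn}(a_i)$ is non-zero; at least one is not identically zero by the Vandermonde condition $\sum_iA_ia_i^m=m!$), but they do not obviously rule out $m<n$: there you would need $|x|^{\alpha}$ to be $\mathcal{B}$-differentiable with $\alpha\le m<n$, which forces $\Phi_{\mathcal{B}}(\alpha)=0$, and such a zero need not exist. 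The paper disposes of $m<n$ differently (Corollary 2): the conditions $\sum_iA_ia_i^{m}=m!$ and $\sum_iB_ib_i^{m}=0$ show $\Delta_{\mathcal{A}}$ cannot lie in the span of dilates of $\Delta_{\mathcal{B}}$, and then the general counterexample machinery applies; you could repair your outline by folding $m<n$ into the completeness claim instead of treating it first.

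The serious gap is the step you yourself flag as ``the crux'' and then leave unproved: given that $\Delta_{\mathcal{A}}^{\epsilon}$ is not in the span of dilates of $\Delta_{\mathcal{B}}^{\epsilon}$, produce one function that is $\mathcal{B}$-differentiable while the $\mathcal{A}$-quotient misbehaves. Placing rescaled profiles on a geometric sequence of scales $h_k$ only controls the $\mathcal{B}$-quotient at or near those scales, whereas $\mathcal{B}$-differentiability requires control of $\Delta_{\mathcal{B}}f(x,h)/h^{n}$ for \emph{every} $h\to 0$; you give no mechanism preventing the profiles at different scales from interfering, and no recipe for the amplitudes. This is exactly what the paper's Theorem 5 supplies: take the multiplicative subgroup $G$ of $\mathbb{R}^{\times}$ generated by the nodes, choose scales $s_k\to 0$ lying in distinct cosets of $G$ so that the supports of the dilates $\alpha_{s_k}$ are pairwise disjoint, prove these dilates are linearly independent modulo the ideal $(\beta)$ (this is where non-membership is actually used, via uniqueness of the decomposition of a group-algebra element along cosets), and then define $f$ through a linear functional that vanishes on the \emph{entire} ideal $(\beta)$ --- hence $\Delta_{\mathcal{B}}f(0,h)=0$ for all $h$, not just selected scales --- and equals $1$ on each $\alpha_{s_k}$, so that $\Delta_{\mathcal{A}}f(0,s_k)/s_k^{n}\to\infty$. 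Without this (or an equivalent) construction, the hard half of the theorem is not proved; the rest of your outline is bookkeeping around it.
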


A basic fact about ordinary derivatives is that the the existence of a lower order derivative does not imply the existence of a higher order derivative. The same is true about generalized Riemann derivatives. Specifically, the next example shows that the existence of a generalized Riemann derivative of a certain order does not imply the existence of any generalized Riemann derivative of higher order. This rules out the case $m>n$ in the above theorem.

\begin{example}
Fix $\mathcal{A}=\{A_{1},\ldots ;a_{1},\ldots \}$ of order $m$ and $\mathcal{B}=\{B_{1},\ldots ;b_{1},\ldots \}$ of order $n$, with $m>n$. We construct a function $f$ which is $\mathcal{B}$-differentiable and not $\mathcal{A}$-differentiable. Let $K$ be the the subfield of $\mathbb{R}$ generated over the rationals by all $a_i$'s and $b_i$'s, and define
\[
f(x)=\begin{cases}
x^m & \text{, if }x\in K\\
0 & \text{, if }x\notin K.
\end{cases}
\]
Note that by the definitions of $K$ and $f$, the operators $\lim_{h\rightarrow 0,h\in K}$ and $\lim_{h\rightarrow 0,h\notin K}$ act independently. They are easy to compute when applied to the quotients $\frac {\Delta_{\mathcal{A}}f(0,h)}{h^m}$ and $ \frac {\Delta_{\mathcal{B}}f(0,h)}{h^n}$. In the first case, these are the different numbers
\[\lim_{h\rightarrow 0,h\in K }\frac {\Delta_{\mathcal{A}}f(0,h)}{h^m}=\frac {d^m(x^m)}{dx^m}(0)=m!\;\text{ and }\;
\lim_{h\rightarrow 0,h\notin K}\frac {\Delta_{\mathcal{A}}f(0,h)}{h^m}=0,\]
so $D_{\mathcal{A}}f(0)$ does not exist. In the second case, the limits
\[\lim_{h\rightarrow 0,h\in K}\frac {\Delta_{\mathcal{B}}f(0,h)}{h^n}=\frac {d^n(x^m)}{dx^n}(0)=0\;\text{ and }\;
\lim_{h\rightarrow 0,h\notin K}\frac {\Delta_{\mathcal{B}}f(0,h)}{h^n}=0\]
are equal, so $D_{\mathcal{B}}f(0)$ exists. In both cases we used the fact that if the $k$th ordinany derivative of a function exists, then any $k$th generalized Riemann derivative exists and they are equal.
Since $K$ is countable, $f$ is measurable.
\end{example}

\begin{example} (i) By taking $\Delta_{\mathcal{B}}f(x,h)=\Delta_1f(x,h)=f(x+h)-f(x)$, the first Riemann difference we studied before, by Theorem 3 the first $\mathcal{A}$-derivatives implied by $R_1$ look like a first order linear combination of non-zero dilates of $R_1^s$ plus a linear combination of dilates of $R_2^s$. It is not hard to see that all these form the class of all first order $\mathcal{A}$-derivatives. Indeed, we already know that
``first order ordinary differentiable implies $\mathcal{A}$-differentiable, for every first order $\mathcal{A}$-derivative.''

(ii) By Theorem 2, there are no even first order and no odd second order $\mathcal{A}$-derivatives. More generally, for each $n$, there are no $n$th $\mathcal{A}$-derivatives of opposite parity.

By Theorem 3, the first symmetric Riemann derivative $R_1^s$ implies all odd first order $\mathcal{A}$-derivatives, and the second symmetric Riemann derivative $R_2^s$ implies all even second order $\mathcal{A}$-derivatives.
As we shall see next, these two implications do not extend to higher order symmetric Riemann derivatives.

(iii) Consider the difference
\[
\Delta_{\mathcal{A}}=\frac{f(2h)-2f(h)+2f(-h)-f(-2h)}{2}
\]
(the reader can check that this satisfies the Vandermonde conditions for a third order generalized Riemann difference) and let $\Delta_{\mathcal{B}}=\Delta_3^sf(0,h)$ be the third symmetric Riemann difference.
We claim that $\Delta_{\mathcal{A}}$ cannot be written as a linear combination of dilates of $\Delta_{\mathcal{B}}$ to deduce, by Theorem 3, that the generalized Riemann derivative corresponding to $\mathcal{A}$ is not implied by the symmetric Riemann derivative $R_3^s$.
To prove the claim, suppose $\Delta_{\mathcal{A}}$ is a linear combination of dilates of $\Delta_{\mathcal{B}}$. We write
\[
\Delta_{\mathcal{A}}=\sum_{i=1}^k\lambda_i\Delta_3^sf(0,r_ih)
=\sum_{i=1}^k\lambda_i\left[ f(\frac 32r_ih)-3f(\frac 12r_ih)+3f(-\frac 12r_ih)-f(-\frac 32r_ih)\right],
\]
where $\lambda_i\neq 0$, for all $i$. Since $\Delta_3^s$ is odd, all $r_i$'s may be taken to be positive, say $0<r_1<r_2<\cdots <r_k$. Note that the terms of the form $Af(rh)$ for the largest (resp. smallest) positive $r$ that appear in the expansions of both sides must be identical.
The largest of the dilates on each side appears only once, so $2=\frac 32r_k$; similarly, each smallest positive dilate appears only once, so $1=\frac 12r_1$.
In particular, $r_k=\frac 43$ is smaller than $r_1=2$, a contradiction.
\end{example}

More elegant equivalent formulations of the statements of Theorems 2 and 3 are given in Theorem 5, using the language of ideals of the group algebra $kG$, where the ground field $k=\mathbb{R}$ is the field of real numbers, and the group $G=\mathbb{R}^{\times }$ is the multiplicative group of non-zero real numbers. Then both theorems are proved in their reformulated forms.

A second kind of equivalence for generalized Riemann derivatives is a.e. equivalence. Say that two $n$th order generalized Riemann derivatives are \textit{a.e. equivalent} if for every measurable function, the set of all real $x$ where exactly one of them exists has measure $0$. In the 1930s, J. Marcinkiewicz and A. Zygmund proved that the Riemann derivative and the symmetric Riemann derivative are a.e. equivalent; see \cite{MZ}. In 1967 it was shown that \textit{all} $n$th order generalized Riemann derivatives are a.e. equivalent; see \cite{As}. This equivalence relation is as coarse as possible, having only one class for each degree. Until recently, the authors believed that the equivalence relation discussed in this paper was as fine as possible, each class consisting only of the scalings of a single $\mathcal{A}$-derivative. Theorem 1 corrected that false notion. Theorem 2 gives the exact answer.

\section{Orders of even and odd differences} 
Let $\Delta _{\mathcal{A}}f(x,h)=\sum_{i=1}^{m}A_{i}f(x+a_{i}h)$ be an $n$th generalized Riemann difference as defined in (\ref{GRD}). By (\ref{evenalignedodd}), the decomposition (\ref{even-odd}) can be written as
\begin{equation}
\begin{aligned}
\Delta_{\mathcal{A}}f(x,h)&=\sum_i B_i[f(x+b_ih)+f(x-b_ih)-2f(x)]\\
&+\sum_i C_i[f(x+c_ih)-f(x-c_ih)],
\label{2.1}
\end{aligned}
\end{equation}
where the $b_i$'s and $c_i$'s are all distinct and positive. The architecture of the above brackets automatically implies the $n$th Vandermonde condition $\sum_iA_i=0$ corresponding to $j=0$. The $n$th Vandermonde system then translates into the following:

\begin{equation}
\sum_iB_i[b_i^j+(-b_i)^j]+\sum_iC_i[c_i^j-(-c_i)^j]=n!\delta_{nj},
\label{BC}
\end{equation}
for $j=1,2,\ldots ,n$. If $S_1(j)$ and $S_2(j)$ are the above left and right sums, an immediate consequence of (\ref{BC}) is that
\begin{equation}
\begin{cases}
S_1(j)=0,&\text{if $j$ odd,}\\
S_2(j)=0,&\text{if $j$ even.}
\end{cases}
\label{S1S2}
\end{equation}
Assume that $n$ is an even number. Then (\ref{BC}) and (\ref{S1S2}) imply that
\begin{equation}
\begin{cases}
S_1(j)=n!\delta_{nj},&\text{if $j=2,4,...,n$,}\\
S_2(j)=0,&\text{if $j=1,3,...,n-1$.}
\end{cases}
\label{S1S2'}
\end{equation}
The systems (\ref{S1S2}) and (\ref{S1S2'}) together say that $\Delta_{\mathcal{A}}^{ev}$ satisfies the Vandermonde equations of order $n$, for $j=1,2,\ldots ,n$, and $\Delta_{\mathcal{A}}^{odd}$ satisfies Vandermonde equations of order higher than $n$, for $j=1,2,\ldots ,n$. With the Vandermonde equation for $j=0$ trivially satisfied for both component differences, we deduce that $\Delta_{\mathcal{A}}^{ev}$ is of order $n$ and $\Delta_{\mathcal{A}}^{odd}$ is either zero or of order $>n$. A similar discussion is conducted when $n$ is an odd number.

Let $\Delta _{\mathcal{A}}f(x,h)=\sum_{i=1}^{m}A_{i}f(x+a_{i}h)$ be a difference of a function $f$. By (\ref{even-odd}) and the notation of Section 1, we write $\Delta_{\mathcal{A}}f(x,h)=\Delta_{\mathcal{A}}^{\epsilon }f(x,h)+\Delta_{\mathcal{A}}^{\epsilon '}f(x,h)$ as a sum of its even and odd parts, where $\epsilon $ respects the parity of $n$ and $\epsilon '$ the opposite parity.

\begin{theorem}
The difference $\Delta_{\mathcal{A}}f(x,h)=\sum_{i=1}^{m}A_{i}f(x+a_{i}h)$ is a generalized $n$th Riemann difference if and only if the following two conditions hold:
\begin{enumerate}
\item[(i)] $\Delta _{\mathcal{A}}^{\epsilon }f(x,h)$ is a generalized Riemann difference of order $n$;
\item[(ii)] $\Delta _{\mathcal{A}}^{\epsilon '}f(x,h)$ is a scalar multiple of a generalized Riemann difference of order $>n$.
\end{enumerate}
\end{theorem}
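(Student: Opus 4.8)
The plan is to reduce the whole statement to the bookkeeping of moments. For a difference $D=\sum_i A_i f(x+a_ih)$, write $\mu_j(D)=\sum_i A_i a_i^j$ for its $j$th moment; then $D$ is an $n$th generalized Riemann difference precisely when $\mu_j(D)=n!\,\delta_{jn}$ for $0\le j\le n$. The one structural fact that drives everything is a parity splitting of moments, which I would record first. Reading the two components off (\ref{evenalignedodd}), the even part $\Delta_{\mathcal{A}}^{ev}=\sum_i A_i\tfrac{f(x+a_ih)+f(x-a_ih)}2$ has moment $\mu_j=\sum_i\tfrac{A_i}2[a_i^j+(-a_i)^j]$, which equals $\mu_j(\Delta_{\mathcal{A}})$ for even $j$ and vanishes for odd $j$; symmetrically, $\mu_j(\Delta_{\mathcal{A}}^{odd})$ equals $\mu_j(\Delta_{\mathcal{A}})$ for odd $j$ and vanishes for even $j$. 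Since moments are additive and $\Delta_{\mathcal{A}}=\Delta_{\mathcal{A}}^{ev}+\Delta_{\mathcal{A}}^{odd}$ by (\ref{even-odd}), this yields $\mu_j(\Delta_{\mathcal{A}})=\mu_j(\Delta_{\mathcal{A}}^{ev})$ for even $j$ and $\mu_j(\Delta_{\mathcal{A}})=\mu_j(\Delta_{\mathcal{A}}^{odd})$ for odd $j$.

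For the forward implication I would simply repackage the computation already carried out in (\ref{BC})--(\ref{S1S2'}). Taking $n$ even, so $\epsilon=ev$: the system (\ref{S1S2'}), together with the automatic vanishing $S_1(j)=0$ for odd $j$ coming from (\ref{S1S2}), says exactly that $\mu_j(\Delta_{\mathcal{A}}^{ev})=n!\,\delta_{jn}$ for all $0\le j\le n$, which is condition (i); and $S_2(j)=0$ for all $0\le j\le n$ (even $j$ automatically, odd $j<n$ by (\ref{S1S2'})) says $\mu_j(\Delta_{\mathcal{A}}^{odd})=0$ for $0\le j\le n$. A finite difference with distinct nodes whose moments all vanish is zero (Vandermonde invertibility), so $\Delta_{\mathcal{A}}^{odd}$ is either $0$ or has its first non-vanishing moment at some $N>n$; normalizing that moment to $N!$ exhibits $\Delta_{\mathcal{A}}^{odd}$ as a scalar multiple of a generalized Riemann difference of order $>n$, which is (ii). The case $n$ odd is the mirror image, swapping the roles of $S_1$ and $S_2$ and of $ev$ and $odd$ -- precisely the ``similar discussion'' alluded to before the theorem.

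The converse is then pure bookkeeping with the parity splitting. Take $n$ even; condition (i) gives $\mu_j(\Delta_{\mathcal{A}}^{ev})=n!\,\delta_{jn}$ for $0\le j\le n$, and condition (ii) gives $\mu_j(\Delta_{\mathcal{A}}^{odd})=0$ for $0\le j\le n$, since order $>n$ forces every moment up through $n$ to vanish and the scalar factor does not change this. Now for each $0\le j\le n$: if $j$ is even then $\mu_j(\Delta_{\mathcal{A}})=\mu_j(\Delta_{\mathcal{A}}^{ev})=n!\,\delta_{jn}$, while if $j$ is odd then $\mu_j(\Delta_{\mathcal{A}})=\mu_j(\Delta_{\mathcal{A}}^{odd})=0=n!\,\delta_{jn}$. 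In both cases the Vandermonde conditions hold, so $\Delta_{\mathcal{A}}$ is an $n$th generalized Riemann difference; the odd $n$ case is again symmetric.

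I expect no serious obstacle; the only points needing care are definitional rather than computational. In the forward direction I must extract the \emph{exact} order of the $\epsilon$-part -- not merely that its lower moments vanish but that $\mu_n(\Delta_{\mathcal{A}}^{\epsilon})=n!\neq 0$ -- which the parity splitting delivers at once, since $\mu_n(\Delta_{\mathcal{A}}^{\epsilon})=\mu_n(\Delta_{\mathcal{A}})=n!$. I must also handle the degenerate possibility $\Delta_{\mathcal{A}}^{\epsilon'}=0$ uniformly, which the phrase ``scalar multiple of'' accommodates via the zero scalar, and confirm that the two components are genuine differences with distinct nodes (namely $\{0,\pm b_i\}$ and $\{\pm c_i\}$, using that the $b_i,c_i$ are positive and distinct as in (\ref{2.1})), so that speaking of their order is legitimate.
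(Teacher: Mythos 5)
Your proposal is correct and follows essentially the same route as the paper: the ``parity splitting of moments'' you record first is exactly the paper's observation that $S_1(j)=0$ for odd $j$ and $S_2(j)=0$ for even $j$ in (\ref{S1S2})--(\ref{S1S2'}), the forward direction is the discussion preceding the theorem, and the converse is the paper's addition of the two half-systems to recover $\sum_i A_i a_i^k=n!\,\delta_{nk}$. Your extra care about extracting the exact order of $\Delta_{\mathcal{A}}^{\epsilon'}$ via its first non-vanishing moment, and about the zero-scalar degenerate case, are minor refinements of the same argument rather than a different approach.
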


\begin{proof}
One implication is already proved above. Conversely, suppose conditions (i) and (ii) are satisfied and that $n$ is an even number. By (\ref{S1S2'}), condition (i) translates into the Vandermonde system
\[
\sum_iA_i\frac {a_i^k+(-a_i)^k}2=n!\delta_{nk},
\]
for $k=0,1,\ldots ,n$, and condition (ii) translates into
\[
\sum_iA_i\frac {a_i^k-(-a_i)^k}2=0,
\]
for $k=0,1,\ldots ,n$.
We have
\[
\sum_iA_ia_i^k=\sum_iA_i\frac {a_i^k+(-a_i)^k}2+\sum_iA_i\frac {a_i^k-(-a_i)^k}2=n!\delta_{nk}+0=n!\delta_{nk},
\]
for $k=0,1,\ldots ,n$, so $\Delta _{\mathcal{A}}f(x,h)$ is an $n$th Riemann difference. The case when $n$ is odd is treated in a similar manner.
\end{proof}

\section{Dilations and the group algebra of $\mathbb{R}^{\times }$} The dilation by  a non-zero real number $r$ of the difference $\Delta_{\mathcal{A}}f(x,h)$ is the difference
\[
\Delta_{\mathcal{B}}f(x,h)=\sum A_if(x+a_irh).
\]
We write this as $x_r\cdot \Delta_{\mathcal{A}}=\Delta_{\mathcal{B}}$. Since
$x_1\cdot \Delta_{\mathcal{A}}=\Delta_{\mathcal{A}}$ and
$x_{rs}\cdot \Delta_{\mathcal{A}}=x_r\cdot (x_s\cdot \Delta_{\mathcal{A}})$, dilation is a group action of the multiplicative group of non-zero real numbers $\mathbb{R}^{\times }$ on the real vector space of all differences of $f$.
On the other hand, since a linear combination with real coefficients of dilates of a difference is also a difference, this gives an action of the real vector space $\text{span}_{\mathbb{R}}\{x_r\;|\;r\in \mathbb{R}^{\times }\}$ on the same space of differences. For example,
\[
(3x_2-ex_{\sqrt{3}})\cdot \Delta_{\mathcal{A}}=3(x_2\cdot \Delta_{\mathcal{A}})-e(x_{\sqrt{3}}\cdot \Delta_{\mathcal{A}}).
\]
The group action and the vector space action together give an action of the group algebra $k\mathbb{R}^{\times }$ of the multiplicative group of the real numbers over the real field $k=\mathbb{R}$. This is the vector space
\[
\mathbf{A}=k\mathbb{R}^{\times }=\text{span}_{k}\{x_r\;|\;r\in \mathbb{R}^{\times }\},
\]
with the multiplication of basis elements given by
\[
x_{r}x_{s}=x_{rs}\text{, for all $r,s\in \mathbb{R}^{\times }$.}
\]
For example, in $\mathbf{A}$ we have \[(2x_1-7x_{-5})(\sqrt{2}x_3-x_{\pi })=2\sqrt{2}x_3-7\sqrt{2} x_{-15}-2x_{\pi }+7x_{-5\pi }.\]

The above discussion can be formalized by saying that the space $D=D(f,x,h)$ of all differences of a function $f$ at $x$ and $h$ without $h$-constant term becomes an $\mathbf{A}$-module via the map $\mathbf{A}\times D\rightarrow D$, given by
\[(x_r,f(x+ah))\mapsto x_r\cdot f(x+ah):=f(x+rah)\text{, for all }r,a\in \mathbb{R}^{\times }.\]
We observe that the unique linear map $\theta :\mathbf{A}\rightarrow D$, defined by $\theta (x_a)=f(x+ah)$, for all $a\in \mathbb{R}^{\times }$, is actually an onto $\mathbf{A}$-module map. Indeed, $\theta (x_rx_a)=\theta (x_{ra})=f(x+rah)=x_r\cdot f(x+ah)=x_r\cdot \theta (x_{a})$. An easy application of this is that
\[D=\theta (\mathbf{A})=\theta (\mathbf{A}x_1)=\mathbf{A}\cdot \theta (x_1)=\mathbf{A}\cdot f(x+h),\]
so $D$ is a cyclic $\mathbf{A}$-module generated by $f(x+h)$.
Indeed, each difference $\Delta_{\mathcal{A}}f(x,h)=\sum_iA_if(x+a_ih)$ of $f$ without $h$-constant term $Af(x)$ can be written as the action
\begin{equation}
\Delta_{\mathcal{A}}f(x,h)=(\sum_iA_ix_{a_i})\cdot f(x+h).
\label{orbit}
\end{equation}
For more properties of group algebras see \cite{P}. 

We also define the algebra $\mathbf{B}$ to be the $k$-semigroup algebra of the multiplicative semigroup (actually monoid) of all real numbers, by adjoining the absorbing basis element $x_0$. Its multiplication is given by $x_0x_r=x_0$, for all real numbers $r$. Since
\[ x_0\cdot f(x+h)=f(x),\]
in light of (\ref{orbit}), the space of all differences of $f$ at $x$ and $h$ is a cyclic $\mathbf{B}$-module generated by $f(x+h)$. Being more inclusive, the action of $\mathbf{B}$ is more general than the action of its subalgebra $\mathbf{A}$. On the other hand, the difference between $\mathbf{B}$ and $\mathbf{A}$ is just the basis element $x_0$. This translates into the space of all differences of $f$ at $x$ and $h$ splitting into differences that do and those that do not contain $f(x)$ as a term. Our concern being with $n$th $\mathcal{A}$-differences of $f$, the only adjustment we make between the actions of algebras $\mathbf{A}$ and $\mathbf{B}$ on $f(x+h)$ is at the Vandermonde conditions for $j=0$. See the sentence following equation (\ref{2.1}) to understand how the $j=0$ condition naturally fits into the theory.

We shall see that questions about generalized derivatives translate into questions about principal ideals of $\mathbf{A}$.

\section{Properties of the groups $\mathbb{R}^{+}$ and $\mathbb{R}^{\times }$}
The multiplicative group $\mathbb{R}^{\times }$ of non-zero real numbers has a single torsion element $-1$ of order two. \textit{Torsion elements} in a group are its elements of finite order. A \textit{torsion group} is a group whose elements are all of finite order. 
The group isomorphism $r\mapsto (\text{sign}(r),|r|)$ gives a decomposition of $\mathbb{R}^{\times }$ as a direct product $\langle -1\rangle \times \mathbb{R}^+$ of its torsion subgroup $\langle -1\rangle $ and its torsion-free subgroup $\mathbb{R}^+$.

We digress for a paragraph and recall coproducts and direct sums. The \textit{direct product} of a family of groups $\{G_i\}_{i\in I}$ is their Cartesian product
\[\prod_{i\in I}G_i=\{(g_i)_{i}\;|\;g_i\in G_i,i\in I\},\]
with the componentwise multiplication. Their \textit{coproduct} or \textit{direct sum} is the subgroup \[\coprod G_i=\bigoplus_{i\in I}G_i=\{(g_i)_{i}\;|\;g_i=1_{G_i}\text{, for all but finitely many }i\in I\}.\]
The above two definitions coincide precisely when the indexing set $I$ is finite. When dealing with multiplicative groups, we prefer the coproduct $\coprod $ because the direct sum notation $\oplus $ connotes additivity. The direct product enjoys the uniqueness of expression of elements by their components, while the coproduct additionally requires the finiteness of expressions. The usual notion of a direct sum of vector spaces is an abelian group direct sum with an additional scalar structure. The finiteness condition comes from the property that every element of a vector space is a finite linear combination of basis elements.

We show that the multiplicative group $\mathbb{R}^{+}$ is the coproduct of $2^{\aleph_0}$ copies of $\mathbb{Q}$. A way to see this is to first notice that $\log :\mathbb{R}^{+}\rightarrow \mathbb{R}$ is a group isomorphism from the multiplicative group $\mathbb{R}^{+}$ to the additive group $\mathbb{R}$ (with inverse map $\exp :\mathbb{R}\rightarrow \mathbb{R}^{+}$), and observe that $\mathbb{R}$ is a direct sum of copies of $\mathbb{Q}$. Indeed, the field $\mathbb{R}$ is a vector space over its subfield $\mathbb{Q}$, hence it has a basis, say $\{ \log \lambda_i\}_{i\in I}$ (known as a Hamel basis). So we see that $\mathbb{R}$ is a direct sum of the $2^{\aleph_0}$ one-dimensional subspaces generated by the basis elements. The subspaces are each isomorphic to $\mathbb{Q}$ as abelian groups.

In fact, it is apparent that $\log :\mathbb{R}^{+}\rightarrow \mathbb{R}$ is also a $\mathbb{Q}$-vector space isomorphism, where the scalar multiplication on the multiplicative abelian group $\mathbb{R}^{+}$ is defined by exponentiation. So now $\{\lambda_i\}_{i\in I}$ is a basis of $\mathbb{R}^{+}$ over $\mathbb{Q}$, and the span of each $\lambda_i$ is written $\lambda_i^{\mathbb{Q}}$. Thus we have the $\mathbb{Q}$-vector space decomposition
\[
\mathbb{R}^{+}=\exp (\sum \mathbb{Q} \log \lambda_i)=\coprod \lambda_i^{\mathbb{Q}}.
\]
Equivalently, every element of $\mathbb{R}^{+}$ is uniquely expressible as a finite product $\prod \lambda_i^{q_i}$, with $q_i\in \mathbb{Q}$.

\begin{remark}
The decomposition of $\mathbb{R}^{+}$ also follows from the structure theory for abelian torsion-free divisible groups; see \cite{F} or \cite{K}. This involves the basic fact that abelian divisible groups are injective $\mathbb{Z}$-modules, a core concept of homological algebra; see \cite{AF} or \cite{CE}.
\end{remark}

We consider the algebra of generalized polynomials $k[x]_{\mathbb{Q}}$ where rational exponents are allowed. One can define $k[x]_{\mathbb{Q}}$ as the union
\[
\bigcup_{n>0}k[x^{\pm \frac 1n} ]
\]
of Laurent polynomial rings using formal $n$th roots of the indeterminate $x$. Algebras of generalized polynomials $k[x_i\;|\;i\in I]_{\mathbb{Q}}$ in many variables $x_i$ are defined similarly. These algebras have bases of generalized monomials given by finite products $\prod x_i^{q_i}$ with $q_i\in \mathbb{Q}$ where the multiplication is the obvious one, agreeing with the ones in the polynomial subrings $k[x_i^{\pm \frac 1n} \;|\;i\in I]$.

\begin{lemma}
The group algebra $k\mathbb{R}^{+}$ is a generalized polynomial algebra  over $k$ in continuum-many variables.
\end{lemma}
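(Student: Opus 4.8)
The plan is to produce an explicit $k$-algebra isomorphism between $k\mathbb{R}^{+}$ and the generalized polynomial algebra $k[x_i\;|\;i\in I]_{\mathbb{Q}}$, where $\{\lambda_i\}_{i\in I}$ is the $\mathbb{Q}$-basis of $\mathbb{R}^{+}$ established above. Since $|I|=2^{\aleph_0}$, the number of variables is automatically continuum-many, as required. Recall that the group algebra $k\mathbb{R}^{+}$ has as its $k$-basis the set of group elements of $\mathbb{R}^{+}$, and by the decomposition $\mathbb{R}^{+}=\coprod \lambda_i^{\mathbb{Q}}$ each such element is \emph{uniquely} a finite product $\prod \lambda_i^{q_i}$ with $q_i\in \mathbb{Q}$. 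On the other side, $k[x_i\;|\;i\in I]_{\mathbb{Q}}$ has as $k$-basis the generalized monomials $\prod x_i^{q_i}$ with $q_i\in \mathbb{Q}$, finitely many non-zero.

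First I would define a map on basis elements by $\prod \lambda_i^{q_i}\mapsto \prod x_i^{q_i}$ and extend it $k$-linearly to $\varphi :k\mathbb{R}^{+}\rightarrow k[x_i\;|\;i\in I]_{\mathbb{Q}}$. The uniqueness of the finite-product expression for elements of $\mathbb{R}^{+}$ guarantees that $\varphi$ is well defined, and it is manifestly a bijection between the two bases, hence a $k$-linear isomorphism.

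Next I would verify that $\varphi$ is multiplicative, for which it suffices to check agreement on basis elements. On each side the multiplication is componentwise addition of rational exponents: the group product $(\prod \lambda_i^{q_i})(\prod \lambda_i^{q_i'})=\prod \lambda_i^{q_i+q_i'}$ is sent to $\prod x_i^{q_i+q_i'}=(\prod x_i^{q_i})(\prod x_i^{q_i'})$, which is exactly the product in the generalized polynomial algebra. Thus $\varphi$ is an algebra isomorphism and the lemma follows.

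The argument has no serious obstacle: all of the substantive work, namely the $\mathbb{Q}$-vector space decomposition of $\mathbb{R}^{+}$, the uniqueness of the finite-product expression, and the cardinality count $|I|=2^{\aleph_0}$, was carried out before the statement. The only point requiring genuine care is the well-definedness of $\varphi$, which hinges on that uniqueness; everything else is the bookkeeping observation that replacing each $\lambda_i$ by a formal indeterminate $x_i$ turns the group algebra of a coproduct of copies of $\mathbb{Q}$ into the corresponding generalized polynomial algebra. Equivalently, one could phrase the same proof structurally via $k\mathbb{Q}\cong k[x]_{\mathbb{Q}}$ together with the fact that the group-algebra functor carries the coproduct $\coprod_i \lambda_i^{\mathbb{Q}}$ to the tensor product of the one-variable algebras $k[x_i]_{\mathbb{Q}}$, but the direct matching of bases is cleaner and fully self-contained.
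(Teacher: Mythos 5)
Your proof is correct and is essentially the paper's own argument: the paper defines the inverse map $\nu(\prod x_i^{q_i})=x_{\prod\lambda_i^{q_i}}$ from the generalized polynomial algebra to $k\mathbb{R}^{+}$ and checks bijectivity and multiplicativity via the same uniqueness of the coproduct decomposition. The direction of the isomorphism is immaterial, so nothing substantive differs.
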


\begin{proof}
Define a map $\nu :k[x_i\;|\;i\in I]_{\mathbb{Q}}\rightarrow k\mathbb{R}^{+}$ on monomials by
\[
\nu (\prod x_i^{q_i})=x_{\prod\lambda_i^{q_i}}
\]
and extend $k$-linearly. By the discussion above concerning the uniqueness for the direct sum decomposition, $\nu $ is a $k$-linear bijection. It also follows directly from the definition of group algebra $k\mathbb{R}^{+}$ that  $\nu $ is an algebra homomorphism. This completes the proof of the lemma.
\end{proof}

\section{The algebras $\mathbf{A}$, $\mathbf{B}$ and their associated differences}

This section is useful in getting familiar with the basic properties of the algebras $\mathbf{A}$ and $\mathbf{B}$ and their interpretation with differences.  It is essential to understanding the rest of the paper, though non-algebraists may wish to skim it on the first pass.

\subsection{Properties of the algebras $\mathbf{A}$ and $\mathbf{B}$.} The group algebra $\mathbf{A}=k\mathbb{R}^{\times }$ and the semigroup algebra $\mathbf{B}=k\mathbb{R}$ have many basic properties some of which translating into properties of differences of a function $f$. Here are some of these properties.

\begin{enumerate}
\item[(AB1)] Both $\mathbf{A}$ and $\mathbf{B}$ have the same identity element $1=x_1$.
\item[(AB2)] Let $\sigma =x_{-1}$. The elements $\pm \sigma $ are the only elements of $\mathbf{A}$ and $\mathbf{B}$ with multiplicative order $2$.
\item[(AB3)] $\mathbf{A}$ contains two orthogonal idempotents $e=\frac 12(1+\sigma )$ and $d=\frac 12(1-\sigma )$ such that $e+d=1=x_1$. These are elements that satisfy $e^2=e$, $d^2=d$, and $de=0$. Pairwise orthogonal idempotents that add up to one are responsible for the decomposition of an algebra as a direct sum of ideals that are also unital subalgebras. In our case,
\begin{equation}
\mathbf{A}=e\mathbf{A}\oplus d\mathbf{A},
\label{eA+dA}
\end{equation}
where $e$ is the identity of $e\mathbf{A}$ and $d$ is the identity of $d\mathbf{A}$.
Algebra $\mathbf{B}$ contains the absorbing idempotent $x_0$, and $\mathbf{B}=kx_0\oplus \mathbf{A}$, a direct sum of unital subalgebras, not ideals. Note that $ex_0=x_0$ and $dx_0=0$. Therefore
\begin{equation}
\mathbf{B}=(kx_0\oplus e\mathbf{A})\oplus d\mathbf{A}=e\mathbf{B}\oplus d\mathbf{B}.
\label{eB+dB}
\end{equation}
\item[(AB4)] Let $e_r=\frac 12(x_r+x_{-r})=ex_r\in e\mathbf{A}$ and $d_r=\frac 12(x_r-x_{-r})=dx_r\in d\mathbf{A}$. \textit{E.g.} we have $d_1=d$ and $e_1=e$. An element of $\mathbf{A}$ looks like $\alpha =\sum_r A_rx_{r}$ and a generic element of $e\mathbf{A}$ is $e\alpha =\sum_r A_re_{r}$. Since $e_{-r}=e_r$, we can write $e\alpha =\sum_{r>0} (A_r+A_{-r})e_{r}$, so
\[e\mathbf{A}=\sum_{r>0} ke_{r}\text{ and similarly }d\mathbf{A}=\sum_{r>0} kd_{r}.\]
\item[(AB5)] After relabeling of coefficients, the above generic element of $e\mathbf{A}$ has the form of a finite sum indexed by positive real numbers
\[
e\alpha =\sum_{r>0} A_re_{r}=\sum_{r>0} A_r\frac {x_r+x_{-r}}2.
\]
Its expression is uniquely determined by twice its positive part $\sum_{r>0} A_rx_r$.
The structure of $e\mathbf{A}$ is then revealed by the mapping $k\mathbb{R}^+\rightarrow e\mathbf{A} $ given by $x_r\mapsto e_r$, for $r>0$. One easily checks that this map is an algebra isomorphism. The result of this is that $e\mathbf{A}$ and (similarly) $d\mathbf{A}$ are isomorphic to the group algebra $k\mathbb{R}^+$. By Lemma 1, we see that both $e\mathbf{A}$ and $d\mathbf{A}$ are generalized polynomial algebras in $2^{\aleph_0}$ variables. This structure will be used in Example 4.
\item[(AB6)] Equation (\ref{eA+dA}) is the decomposition of $\mathbf{A}$ as a direct sum of its eigenspaces given by multiplication by $\sigma $. Specifically, $e\mathbf{A}$ is the $\sigma $-eigenspace with eigenvalue $+1$ and $d\mathbf{A}$ is the $\sigma $-eigenspace with eigenvalue $-1$. We call $e\mathbf{A}$ the even part of $\mathbf{A}$, and $d\mathbf{A}$ is the odd part of $\mathbf{A}$. 
\item[(AB7)] The elements of $\mathbf{A}$ of the form $cx_r$, where $c,r\in\mathbb{R}^{\times }$ are invertible in $\mathbf{A}$. These are the \textit{trivial units} of $\mathbf{A}$; their inverses are $c^{-1}x_{r^{-1}}$. We shall see next that the trivial units are not all of the units of $\mathbf{A}$.
It is well-known (see \cite{P}, Chapter 13) that the group algebra of a torsion-free abelian group has only trivial units. In particular, the same is true for the group algebra $k\mathbb{R}^+$ and by (AB5) for its isomorphic copies $e\mathbf{A}$, and $d\mathbf{A}$. The (trivial) units in $e\mathbf{A}$ are $e$-multiples of trivial units in $\mathbf{A}$. This means they are of the form $Aex_r=Ae_r$, for $0\neq A\in k$ and $r>0$. By the same token, the units of $d\mathbf{A}$ are of the form $Bdx_s=Bd_s$, for $0\neq B\in k$ and $s>0$. Since the units of a direct sum of algebras are sums of the units of the summands, by (\ref{eA+dA}) we conclude that the units of $\mathbf{A}$ are the elements of the form
\[ Ae_r+Bd_s,\]
for $0\neq A,B\in k$ and $r,s>0$.
\item[(AB8)] Let $V=\sum_{r\in k}kx_r$. Then the $k$-space of all real functions
\[\mathcal{F}(k)=\{f\;|\;f:k\rightarrow k\}\] is isomorphic to the dual $k$-space
\[V^{*}=\text{Hom}_k(V,k)=\{\varphi \;|\;\varphi :V\rightarrow k\}\]
via the mapping $\mathcal{F}(k)\ni f\mapsto \varphi \in V^{*}$, where $\varphi $ is the linear map defined on basis elements of $V$ by $\varphi (x_r)=f(r)$, for all $r\in k$.
\item[(AB9)] This property, which looks more technical than it really is, is only needed in the proof of Corollary 1.
\newline Let $k'$ be a subfield of $k$ and $V'=\sum_{r\in k'}k'x_r$. Each function $f:k'\rightarrow k'$ can be viewed as a function $f:k\rightarrow k$ by setting $f(t)=0$, for $t\in k\setminus k'$. In this way, the function space $\mathcal{F}(k')$ embeds naturally in $\mathcal{F}(k)$ as the $k'$-subspace
\[\mathcal{F}'(k)=\{f\in \mathcal{F}(k)\;|\;R(f)\subseteq k'\text{ and }k\setminus k'\subseteq N(f)\},\]
where $R(f)$ and $N(f)$ are the range and the nullset of $f$.
By (AB8) applied to $k'$ and $V'$ in place of $k$ and $V$, we have $\mathcal{F}(k')$ is $k'$-linear isomorphic to ${V'}^{*}=\text{Hom}_{k'}(V',k')$.
Let $\mathcal{S}=\{1\}\cup \mathcal{T}$ be a basis of $k$ over $k'$. Then $\bigcup_{r\in k}\mathcal{S}x_r$ is a $k'$-basis of $V$.
Each $k'$-linear map $\varphi :V'\rightarrow k'$ naturally extends to a $k'$-linear map $\varphi :V\rightarrow k$ by setting $\varphi(t)=0$, for $t\in \left(\bigcup_{r\in k}\mathcal{S}x_r\right)\setminus \{x_r|r\in k'\}$, which is also an element of $V^{*}$. In this way, ${V'}^{*}$ is $k'$-linearly isomorphic to
\[V^{*'}=\{\varphi \in V^{*}\;|\;R(\varphi )\subseteq k'\text{ and }(\bigcup_{r\in k}\mathcal{S}x_r)\setminus \{x_r|r\in k'\}\subseteq N(\varphi )\}\]
which is a $k'$-subspace of $V^{*}$. Moreover, $\mathcal{F}'(k)$ is isomorphic to $V^{*'}$ as a $k'$-space via the mapping $\mathcal{F}'(k)\ni f\mapsto \varphi \in V^{*'}$, where $\varphi $ is defined on $k'$-basis elements of $V$ by $\varphi (x_r)=f(r)$ and $\varphi (\mathcal{T}x_r)=0$, for all $r\in k$.
\end{enumerate}

\[
\begin{array}{ccccc}
\mathcal{F}(k') &\rightarrow &\mathcal{F'}(k)&\hookrightarrow &\mathcal{F}(k)\\
\downarrow & & \downarrow & & \downarrow \\
{V'}^{*}&\rightarrow &V^{*'}&\hookrightarrow & V^{*} 
\end{array}
\]
The above diagram illustrates the relationship between the vector spaces and linear maps considered in (AB8) and (AB9). The right vertical arrow is the $k$-linear isomorphism of (AB8). The four arrows to the left are the $k'$-linear isomorphisms of (AB9). The two hook-arrows are inclusions of $k'$-subspaces.

\subsection{And their interpretation with differences.} The above properties translate into the language of even and odd differences of Section 1.2 or group algebra actions of Section 3. For example, we have
\[
\begin{aligned}
e\cdot f(x+h)&=\frac {f(x+h)+f(x-h)}2,\\
d\cdot f(x+h)&=\frac {f(x+h)-f(x-h)}2.
\end{aligned}
\]
More generally, if $\alpha =\sum_iA_ix_{a_i}$ is the group algebra element determined by $\Delta_{\mathcal{A}} $ in (\ref{orbit}), then
\[
\begin{aligned}
(e\alpha )\cdot f(x+h)=e\cdot \Delta_{\mathcal{A}}f(x,h)&=\Delta_{\mathcal{A}}^{ev}f(x,h),\\
(d\alpha )\cdot f(x+h)=d\cdot \Delta_{\mathcal{A}}f(x,h)&=\Delta_{\mathcal{A}}^{odd}f(x,h).
\end{aligned}
\]
The second equalities say that the actions of $e$ and $d$ on the space $\mathcal{D}$ of all differences of $f$ at $x$ and $h$ map these differences onto their even and odd parts, respectively. For the first equalities, we notice that $\alpha \in \mathbf{A}$ (resp. $\alpha \in \mathbf{B}$) is equivalent to $\Delta_{\mathcal{A}}f(x,h)$ is a difference without $f(x)$-term (resp. $\Delta_{\mathcal{A}}f(x,h)$ is any difference of $f$). We deduce that the submodules of $\mathcal{D}$ generated by $f(x+h)$ under the actions of $e\mathbf{A}$ and $d\mathbf{A}$ (resp. $e\mathbf{B}$ and $d\mathbf{B}$) are exactly all even and odd differences without $f(x)$-term (resp. all even and odd differences of $f$). Note that odd differences do not have $f(x)$-terms, so the actions of $d\mathbf{A}$ and $d\mathbf{B}$ on $f(x+h)$ coincide. The natural conclusion from this discussion is that the actions of $\mathbf{A}$ and $\mathbf{B}$ on $f(x+h)$, which are sums of the actions of their components given in (\ref{eA+dA}) and (\ref{eB+dB}), amount to splitting their corresponding differences into even and odd components given in (\ref{even-odd}).

\section{Translation Theorem and Proof of Main Results}

The major result of this section, Theorem 5, translates the implication and the equivalence of generalized derivatives into the inclusion and equality of principal ideals of algebras $\mathbf{A}$ or $\mathbf{B}$. We then prove the main classification results of Theorems 2 and 3 stated in the Introduction by means of classifying principal ideals of $\mathbf{A}$ or $\mathbf{B}$ by inclusion or equality. We prove this easily by reference to the basic group algebra properties (AB1)-(AB9) of Section 5.

\subsection{Translation Theorem.} Let $\alpha =\sum_iA_ix_{a_i}$ and $\beta =\sum_iB_ix_{b_i}$ be the elements of $\mathbf{B}$ that correspond to the differences $\Delta_{\mathcal{A}}f(x,h)=\sum_iA_if(x+a_ih)$ and $\Delta_{\mathcal{B}}f(x,h)=\sum_iB_if(x+b_ih)$, as defined in Section 5.2. Call $\{ x_{b_1},x_{b_2},\ldots \}$ the \textit{support} of $\beta $.
As examples, $\alpha =d_1$ corresponds to the first symmetric Riemann difference $\Delta_1^sf(x,h)$, $\alpha =x_1-x_0$ corresponds to the first (forward) Riemann difference $\Delta_1f(x,h)$, and $\alpha =A(e_r-x_0)+d_1$ corresponds to the first difference $\Delta_{\mathcal{A}}f(x,h)$ of Theorem 1.

The ideal of $\mathbf{B}$ generated by $\alpha $ is denoted by $(\alpha )$. Let $\alpha_r =\alpha x_r=\sum A_ix_{a_ir}$ be the \textit{dilate} of $\alpha $ by $r\in k^{\times }$. Note especially that $x_r$ invertible implies $(\alpha )=(\alpha_r)$, and since $(\alpha )$ is the set of all $\mathbf{B}$-multiples of $\alpha $, the ideal $(\alpha )$ is the linear span of all dilates of $\alpha $.
Moreover, since in arguments about $\mathcal{A}$-differentiability of a general function $f$ at $x$ we can always assume without loss that $x=0$ and $f(0)=0$, this amounts to factoring out the ideal $(x_0)$ of $\mathbf{B}$, that is to projecting down to $\mathbf{A}$. In this process, the ideals of $\mathbf{B}$ may be assumed to be ideals of $\mathbf{A}$.

\begin{theorem} With the above notation, if $\alpha $ and $\beta $ correspond to order $n$ generalized Riemann differences $\Delta_{\mathcal{A}}$ and $\Delta_{\mathcal{B}}$, then
\begin{enumerate}
\item[(i)] $(\alpha )\supseteq (\beta )$ iff ``$\mathcal{A}$-differentiable at $x$ $\Longrightarrow $ $\mathcal{B}$-differentiable at $x$''.
\item[(ii)] $(\alpha )= (\beta )$ iff ``$\mathcal{A}$-differentiable at $x$ $\Longleftrightarrow $ $\mathcal{B}$-differentiable at $x$''.
\end{enumerate}
\end{theorem}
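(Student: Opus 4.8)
The plan is to deduce (ii) from (i): the equivalence of differentiability is the conjunction of the two implications, and $(\alpha)=(\beta)$ is the conjunction of $(\alpha)\supseteq(\beta)$ and $(\beta)\supseteq(\alpha)$, so only (i) must be proved. Since in questions of $\mathcal{A}$-differentiability at a point we may assume $x=0$ and $f(0)=0$, I work in $\mathbf{A}=k\mathbb{R}^{\times}$, where every $x_r$ is a unit. Then $(\alpha)\supseteq(\beta)$ is equivalent to $\beta\in(\alpha)$, and, because $(\alpha)$ is the linear span of the dilates $\alpha_r=\alpha x_r$, to $\beta$ being a finite linear combination $\beta=\sum_j\lambda_j\alpha_{r_j}$ of dilates of $\alpha$. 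Thus (i) becomes the statement that $\beta$ is a linear combination of dilates of $\alpha$ if and only if $\mathcal{A}$-differentiability implies $\mathcal{B}$-differentiability.

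The forward implication is the routine direction. Using (\ref{orbit}), the relation $\beta=\sum_j\lambda_j\alpha_{r_j}$ reads $\Delta_{\mathcal{B}}f(x,h)=\sum_j\lambda_j\Delta_{\mathcal{A}}f(x,r_jh)$. If $D_{\mathcal{A}}f(x)=L$ exists, then for each $j$ one has $\Delta_{\mathcal{A}}f(x,r_jh)/h^n=r_j^n\,\Delta_{\mathcal{A}}f(x,r_jh)/(r_jh)^n\to r_j^nL$ as $h\to0$ (a two-sided limit, since $r_jh\to0$), whence $\Delta_{\mathcal{B}}f(x,h)/h^n\to\sum_j\lambda_jr_j^nL$ exists and $f$ is $\mathcal{B}$-differentiable.

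The converse is the heart of the matter; I would prove its contrapositive. Assume $\beta\notin(\alpha)$ and construct a measurable $f$ that is $\mathcal{A}$-differentiable but not $\mathcal{B}$-differentiable at $0$. Splitting $\mathbf{A}=e\mathbf{A}\oplus d\mathbf{A}$ by the central idempotents of (\ref{eA+dA}), non-membership forces $e\beta\notin(e\alpha)$ or $d\beta\notin(d\alpha)$; assume the former, the odd case being symmetric with sign-twisted test functions. By (AB5) and Lemma 1, $e\mathbf{A}\cong k\mathbb{R}^{+}$ is a generalized polynomial algebra, and I would test against functions $f(h)=\operatorname{Re}(|h|^{s})$, which realize the continuous characters $\chi_s\colon e_r\mapsto r^{s}$ of $k\mathbb{R}^{+}$ in the sense that $\Delta_\gamma f(0,h)=\operatorname{Re}(|h|^{s}\chi_s(\gamma))$ for every $\gamma\in e\mathbf{A}$. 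The decisive feature is that a character killing $\alpha$ kills its whole ideal: if $\chi_{s_0}(e\alpha)=0$, then $\chi_{s_0}(\mu\,e\alpha)=\chi_{s_0}(\mu)\chi_{s_0}(e\alpha)=0$ for all $\mu$, so $\Delta_{\mathcal{A}}f(0,h)\equiv0$ and $f$ is $\mathcal{A}$-differentiable with derivative $0$, whereas $\chi_{s_0}(e\beta)\neq0$ prevents $\Delta_{\mathcal{B}}f(0,h)/h^n$ from converging. Each such $f$ is continuous off $0$, hence measurable.

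The main obstacle is the algebraic--analytic step of producing a usable separating character. Restricting to the divisible hull $H\cong\mathbb{Q}^{p}$ of the subgroup generated by the $|a_i|$ and $|b_j|$---a finite-rank direct summand of $\mathbb{R}^{+}$, so that $kH$ is a retract of $e\mathbf{A}$ and ideal membership descends---turns the problem into genuine non-divisibility $e\alpha\nmid e\beta$ in a generalized Laurent polynomial ring, with symbols the exponential polynomials $\chi_s(e\alpha)=\sum_iA_i|a_i|^{s}$ and $\chi_s(e\beta)=\sum_jB_j|b_j|^{s}$. The continuous characters trace the one-parameter curve $s\mapsto(\lambda_1^{s},\dots,\lambda_p^{s})$ in $(\mathbb{C}^{\times})^{p}$, Zariski dense because the frequencies $\log\lambda_l$ are $\mathbb{Q}$-linearly independent, and the crux is that this curve detects the non-divisibility: it meets the hypersurface $\chi_s(e\alpha)=0$ at a point where $\chi_s(e\beta)$ does not vanish to the same order. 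The shared Vandermonde conditions force the common real zeros $s=0,2,\dots,n-2$, so the separating $s_0$ must come from the remaining, generically complex, zeros of $\chi_s(e\alpha)$; I must check that one of them has $\operatorname{Re}(s_0)<n$, or lies on the line $\operatorname{Re}(s)=n$ off the real axis, so that $\Delta_{\mathcal{B}}f(0,h)/h^n$ genuinely diverges or oscillates while $\Delta_{\mathcal{A}}f(0,h)\equiv0$. This location analysis for the zeros of an exponential polynomial is the one genuinely hard point. An elementary sparse-support construction across scales $h_k\to0$ in the spirit of Example 2 gives measurable counterexamples in concrete cases but faces a closure obstruction---orthogonality to all dilates of $\alpha$ cannot be enforced by a finite linear system for multiplicatively generic nodes---which the character method avoids precisely because a single character annihilates the entire principal ideal at once.
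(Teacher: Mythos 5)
Your forward direction is the same as the paper's and is fine, as are the reductions (deducing (ii) from (i), passing to $x=0$, $f(0)=0$, and replacing $(\alpha)\supseteq(\beta)$ by ``$\beta$ is a linear combination of dilates of $\alpha$''). The converse, however, has a genuine gap, and the character method cannot close it in general. A character $\chi_{s}$ realized by $f(h)=\operatorname{Re}(|h|^{s})$ that kills $e\alpha$ does kill the ideal $(e\alpha)$, but characters do not detect ideal non-membership, only non-membership in the radical. Concretely, $\alpha=x_4-2x_2+x_1=(x_2-1)^2$ and $\beta=x_2-x_1$ both satisfy the first-order Vandermonde conditions, and $\beta\notin(\alpha)$ (in $k[2^{\mathbb{Q}}]\cong k[t^{\pm1}]_{\mathbb{Q}}$ this would force $(t-1)g(t)=1$), yet every character annihilating $\alpha$ annihilates $\beta$. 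Passing to jets (``vanishing to the same order'') repairs this one example but lands you exactly on the step you admit is unresolved: you must produce a complex zero $s_0$ of the exponential polynomial $\sum_iA_i|a_i|^{s}$ at which $\sum_jB_j|b_j|^{s}$ vanishes to strictly smaller order \emph{and} which satisfies $\operatorname{Re}(s_0)\le n$ (with $\operatorname{Im}(s_0)\neq0$ in the boundary case) so that $\Delta_{\mathcal{B}}f(0,h)/h^{n}$ actually fails to converge. With several multiplicatively independent nodes, no argument is given that the curve $s\mapsto(\lambda_1^{s},\dots,\lambda_p^{s})$ meets the relevant component of the zero locus at a point generic enough to see the transversal order of vanishing. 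So the contrapositive is not proved.

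The paper's proof discards characters entirely and is much softer. Let $G$ be the countable subgroup of $k^{\times}$ generated by the nonzero $a_i$ and $b_i$, and choose representatives $s_n\to0$ of distinct cosets of $G$. Since the supports of the dilates $\beta_{s_n}$ lie in the pairwise disjoint cosets $Gs_n$, and a group algebra element decomposes uniquely into pieces supported on distinct cosets of a subgroup, $\beta\notin(\alpha)$ forces the $\beta_{s_n}$ to be linearly independent modulo $(\alpha)$. Writing $V=(\alpha)\oplus\sum k\beta_{s_n}\oplus W$ and taking the linear functional $\varphi$ that vanishes on $(\alpha)$ and $W$ with $\varphi(\beta_{s_n})=1$ gives a function $f$ with $\Delta_{\mathcal{A}}f(0,h)\equiv0$ while $\Delta_{\mathcal{B}}f(0,s_m)/s_m^{n}=1/s_m^{n}$ diverges. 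This is precisely the ``sparse-support construction across scales'' you dismiss: the closure obstruction you raise does not occur, because orthogonality to \emph{all} dilates of $\alpha$ is imposed on the whole ideal $(\alpha)$ at once as a single summand of a Hamel-basis decomposition of $V$ (axiom of choice), not as a finite linear system. Measurability, which your test functions would give for free, is recovered separately in the paper's Corollary 1 by rerunning the construction over the countable subfield generated by the data.
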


\begin{proof}
(i) Assume $(\alpha )\supseteq (\beta )$. We write $\beta $ as a finite sum $\beta=\sum_rc_r\alpha_r$ with $r,c_r\in k$. Let $f$ be $\alpha $-differentiable at $x$ and denote $D_{\alpha }f(x)=d\in k$. As an abuse of notation, we write $\alpha $-differentiable, $D_{\alpha }f(x)$ and $\alpha (x,h)$ to respectively denote $\mathcal{A}$-differentiable, $D_{\mathcal{A}}f(x)$ and $\Delta_{\mathcal{A}}f(x,h)$. Since
\[
D_{\alpha_r }f(x)=\lim_{h\rightarrow 0}\frac {\alpha_r(x,h)}{h^n}=\lim_{h\rightarrow 0}\frac {\alpha(x,rh)}{(rh)^n}\cdot r^n=r^nD_{\alpha }f(x),
\]
linearity of the limit operator makes $f$ a $\beta$-differentiable function at $x$ and $D_{\beta }f(x)=\sum_rc_rr^nd$. This sum actually equals $d$, since $\beta $ is a generalized Riemann derivative.

It remains to prove the converse. Let $G$ be the subgroup of $k^{\times }$ generated by all non-zero $a_i$'s and $b_i$'s. Then $G$ is countable while the set of its cosets in $k^{\times }$ is not. Consider $\{s_n\}_{n>0}$ be a sequence of representatives of cosets of $G$ in $k^{\times }$ such that $\lim_{n\rightarrow \infty }s_n=0$. We prove the contrapositive statement: assuming $(\alpha )\nsupseteq (\beta )$, we show that there exists a function $f$ such that $D_{\mathcal{A}}f(0)$ exists but $D_{\mathcal{B}}f(0)$ does not exist.

Observe that, by assumption and the obvious fact that $(\beta_{s_n})=(\beta )$, we have $\beta_{s_n}\notin (\alpha )$ for all $n$. Moreover, the $\beta_{s_n}$'s are linearly independent modulo the ideal $(\alpha )$. To see this, we note that the support of $\beta  $ is included in $G$ and this makes the supports of the $\beta_{s_n}$'s included in the $Gs_n$'s, hence they are pairwise disjoint. Suppose that $\sum \lambda_n\beta_{s_n}\in (\alpha )$, for $\lambda_n\in k$. We write this as $\sum \lambda_n\beta_{s_n}=\alpha \sum \mu_ix_{r_i}$, for $\mu_i\in k$ and $r_i\in k^{\times}$. For each $n$, let $t_{s_n}$ be the sum of all terms $\mu_ix_{r_i}$ of the last sum for which $r_i\in G{s_n}$, and let $t'$ be the sum of the remaining terms. The last equation becomes $\sum \lambda_n\beta_{s_n}=\alpha (t'+\sum t_{s_n})=\alpha t'+\sum \alpha t_{s_n}$. The expression of an element of a group algebra as a sum of elements with supports in distinct cosets of a subgroup is unique.
Thus $\lambda_n\beta_{s_n}=\alpha t_{s_n}$, for all $n$, and $0=\alpha t'$. If $\lambda_n\neq 0$, for some $n$, then
$\beta_{s_n}\in (\alpha )$, a contradiction.

Let $V$ be the vector space defined in (AB8) of Section 5.1. The axiom of choice implies that every linearly independent subset of $V$ can be completed to a basis, so let $W$ be a complement of the subspace $(\alpha )\oplus \sum k\beta_{s_n}$ in $V$. This means that
\[
V=(\alpha )\oplus \sum k\beta_{s_n}\oplus W.
\]
Define a functional $\varphi \in V^{*}$ by setting $\varphi $ identically equal to zero on both $(\alpha )$ and $W$, and $\varphi (\beta_{s_n})=1$, for all $n$. Then the corresponding function $f$ has
\[ \Delta_{\mathcal{A}}f(0,h)=\sum A_if(a_ih)=\sum A_i\varphi (x_{a_ih})=\varphi \left(\sum A_ix_{a_ih}\right)=\varphi (\alpha_h)=0,\]
since $\alpha_h\in (\alpha )$, for all $h$. Thus $D_{\mathcal{A}}f(0)=\lim_{h\rightarrow 0}\frac {\Delta_{\mathcal{A}}f(0,h)}{h^n} =0$. On the other hand,
\[ \Delta_{\mathcal{B}}f(0,s_m)=\sum B_if(b_is_m)=\sum B_i\varphi (x_{b_is_m})=\varphi \left(\sum B_ix_{b_is_m}\right)=\varphi (\beta_{s_m})=1,\]
for all $m$, implies that $D_{\mathcal{B}}f(0)=\lim_{m\rightarrow \infty }\frac {\Delta_{\mathcal{B}}f(0,s_m)}{s_m^n} =\lim_{m\rightarrow \infty }\frac {1}{s_m^n} $ does not exist as a finite number.

Part (ii) is an easy consequence of part (i).
\end{proof}

\begin{corollary}
The counterexample $f$ constructed in the proof of Theorem 5 may not be a measurable function. Nevertheless, the proof can be adapted to make $f$ measurable.
\end{corollary}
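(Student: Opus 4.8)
The plan is to rerun the construction in the proof of Theorem 5 inside a countable subfield of $k=\mathbb{R}$, so that the resulting $f$ is supported on a null set and is therefore measurable. The first sentence of the corollary needs no real argument: the $f$ of Theorem 5 is recovered from a functional $\varphi\in V^{*}$ prescribed on a basis produced by the axiom of choice, and, just as for the classical non-measurable functions arising from a Hamel basis, nothing forces $f(r)=\varphi(x_r)$ to vanish off a null set; so one cannot expect $f$ to be measurable. The content is in the word ``nevertheless''.

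For the adaptation, let $k'$ be the subfield of $\mathbb{R}$ generated over $\mathbb{Q}$ by the finitely many numbers $A_i,B_i,a_i,b_i$. This $k'$ is countable, and both $\alpha=\sum_iA_ix_{a_i}$ and $\beta=\sum_iB_ix_{b_i}$ lie in the group algebra $\mathbf{A}'=k'[(k')^{\times}]$ of $(k')^{\times}$ over $k'$. I would carry out the entire argument of Theorem 5 inside $\mathbf{A}'$ and the space $V'=\sum_{r\in k'}k'x_r$ of (AB9). Because every $\mathbf{A}'$-multiple of $\alpha$ is in particular an $\mathbf{A}$-multiple of $\alpha$, the ideal generated by $\alpha$ in $\mathbf{A}'$ is contained in the ideal $(\alpha)$ of $\mathbf{A}$; hence the hypothesis $\beta\notin(\alpha)$ that drives the proof of Theorem 5 continues to hold in $\mathbf{A}'$. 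The dilates $\beta_{s_n}$ remain linearly independent modulo $(\alpha)$ by the same disjointness of their supports across distinct cosets of $G=\langle a_i,b_i\rangle$, and choosing a $k'$-complement $W'$ of $(\alpha)\oplus\sum k'\beta_{s_n}$ in $V'$ lets me define $\varphi'\in(V')^{*}$ vanishing on $(\alpha)$ and $W'$ with $\varphi'(\beta_{s_n})=1$. By (AB9), extending $\varphi'$ by zero off $\{x_r\mid r\in k'\}$ produces $\varphi\in V^{*'}$ whose associated function $f\in\mathcal{F}'(k)$ is the zero-extension of a function on $k'$; as $k'$ is countable, $f$ vanishes off a null set and is therefore measurable.

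Next I would verify that this $f$ still works. When $h\notin k'$, each $a_ih$ and each $b_ih$ lies outside $k'$ (since $a_i,b_i\in(k')^{\times}$), so every term of $\Delta_{\mathcal{A}}f(0,h)$ and of $\Delta_{\mathcal{B}}f(0,h)$ is zero; when $h\in(k')^{\times}$ one has $\Delta_{\mathcal{A}}f(0,h)=\varphi'(\alpha_h)=0$ because $\alpha_h\in(\alpha)$. Thus $\Delta_{\mathcal{A}}f(0,h)=0$ for every $h\neq 0$ and $D_{\mathcal{A}}f(0)=0$, whereas $\Delta_{\mathcal{B}}f(0,s_m)=\varphi'(\beta_{s_m})=1$ forces $\Delta_{\mathcal{B}}f(0,s_m)/s_m^{n}=1/s_m^{n}\to\infty$, so $D_{\mathcal{B}}f(0)$ does not exist.

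The step I expect to be the main obstacle is the choice of the sequence $s_n$, which must simultaneously lie in $(k')^{\times}$, represent pairwise distinct cosets of the finitely generated group $G$, and tend to $0$. I would produce them among the rationals. As a finitely generated abelian group, $G$ has a finitely generated subgroup $G\cap\mathbb{Q}^{\times}$, which therefore involves only finitely many primes, collected in a finite set $P$. For distinct primes $p,q\notin P$, a relation $q/p\in G$ would place $q/p$ in $G\cap\mathbb{Q}^{\times}$, contradicting unique factorization; hence the numbers $1/p$ with $p\notin P$ lie in pairwise distinct cosets of $G$ in $(k')^{\times}$, and taking $s_n=1/p_n$ along an increasing enumeration of the primes outside $P$ secures both the distinctness of cosets needed for the linear independence and the convergence $s_n\to 0$ needed to defeat $\mathcal{B}$-differentiability.
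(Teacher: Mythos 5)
Your proposal is correct and follows essentially the same route as the paper: restrict the whole construction to a countable subfield $k'$ containing the data, build the functional over $k'$, and extend by zero so that $f$ is supported on a countable (hence null) set and is therefore measurable. The only real divergence is in arranging $s_n\in k'$: the paper simply adjoins the sequence $\{s_n\}$ to the generators of $k'$ (which stays countable), whereas you construct the $s_n$ explicitly as reciprocals of primes outside the finite set of primes occurring in the finitely generated group $G\cap\mathbb{Q}^{\times}$; your argument for that is valid, just more work than the paper needs.
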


\begin{proof} We follow closely the proof of Theorem 5 and stress only the differences. Let $k'$ be the subfield of $k$ generated over the rationals by all $A_i,a_i,B_i,b_i$ and $\{s_n\}_{n\geq 1}$. This is a countable field. Recall from earlier in the section that the ideal $(\alpha )$ is the $k$-span of all shifts $\alpha_r=\alpha x_r$, for $r\in k$. We define $[\alpha ]$ to be the $k'$-span of all shifts $\alpha_r$, for $r\in k'$. Then $(\alpha )=([\alpha ])$ and also $(\beta )=([\beta ])$. Note that the $\beta_{s_n}$'s are $k'$-linearly independent modulo $[\alpha ]$, since they are $k$-linearly independent modulo $(\alpha )$. Let $W$ be a $k'$-complement of $[\alpha ]\oplus \sum k'\beta_{s_n}$ in $V$. This means that
\[
V=[\alpha ]\oplus \sum k'\beta_{s_n}\oplus W.
\]
Since the first two terms above are part of $V'$, using the $k'$-basis of $V$ of (AB9), we may assume that $W$ contains all basis elements of $V$ that are not basis elements of $V'$.
Define a functional $\varphi \in \text{Hom}_{k'}(V,k)$ by setting $\varphi $ identically equal to zero on both $[\alpha ]$ and $W$, and $\varphi (\beta_{s_n})=1$, for all $n$. Then $\varphi $ is an element of $V^{*'}$. Let $f\in \mathcal{F}'(k)$ be the function  that corresponds to $\varphi $ via the last isomorphism in (AB9). Let $h$ be any real number. If $h\in k'$, then $\alpha_h\in [\alpha ]$, so
\[ \Delta_{\mathcal{A}}f(0,h)=\sum A_if(a_ih)=\sum A_i\varphi (x_{a_ih})=\varphi \left(\sum A_ix_{a_ih}\right)=\varphi (\alpha_h)=0.\]
If $h\in k\setminus k'$, then
\[ \Delta_{\mathcal{A}}f(0,h)=\sum A_if(a_ih)=\sum A_i\varphi (x_{a_ih})=\sum A_i\cdot 0=0,\]
by the definition of $\varphi $, since $a_ih\in k\setminus k'$ makes $x_{a_ih}\in W$. The rest follows the last part of the proof of Theorem 5. The function $f$ is measurable, since it is non-zero on a subset of the countable set $\sum k'\beta_{s_n}$.
\end{proof}

Let $n\geq 2$. If the ordinary $n$th derivative exists at $x$, so does the
ordinary $m$th derivative for each $m$, $1\leq m<n$. This property fails for
all $\mathcal{A}$-derivatives. 

\begin{corollary}
Let $\mathcal{A}$ be a generalized Riemann derivative of order $n$, $n\geq 2$%
. If $\mathcal{B}$ is any generalized Riemann derivative of order $m$ where $%
1\leq m<n$, then there is a measurable function $f$ so that $D_{\mathcal{A}%
}f\left( 0\right) $ exists, but $D_{\mathcal{B}}f\left( 0\right) $ does not.
\end{corollary}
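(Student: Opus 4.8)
The plan is to reduce the statement to a purely algebraic non-containment of principal ideals and then produce the contradiction through a short computation with moments. Working in $\mathbf{A}$ (after projecting out $x_0$ as in Section 5), let $\alpha=\sum_iA_ix_{a_i}$ and $\beta=\sum_iB_ix_{b_i}$ be the elements corresponding to the order-$n$ difference $\Delta_{\mathcal A}$ and the order-$m$ difference $\Delta_{\mathcal B}$. The construction in the converse half of Theorem 5, together with its measurable refinement in Corollary 1, uses only the hypothesis $\beta\notin(\alpha)$: it produces a measurable $f$ with $\Delta_{\mathcal A}f(0,h)\equiv 0$ and $\Delta_{\mathcal B}f(0,s_\ell)=1$ along a sequence $s_\ell\to 0$. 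Since the numerator $\Delta_{\mathcal A}f(0,h)$ vanishes identically, $D_{\mathcal A}f(0)=\lim_{h\to0}\Delta_{\mathcal A}f(0,h)/h^n=0$ exists no matter what the order of $\mathcal A$ is, while $D_{\mathcal B}f(0)=\lim_\ell 1/s_\ell^{\,m}$ fails to exist. So it suffices to prove $(\alpha)\not\supseteq(\beta)$, equivalently $\beta\notin(\alpha)$, and this is exactly where the inequality of orders $m<n$ is used.

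To that end I would introduce, for each $j\ge 1$, the $j$th moment functional $\mu_j:\mathbf A\to k$ defined on basis elements by $\mu_j(x_c)=c^{\,j}$ and extended $k$-linearly, so that $\mu_j(\sum_iC_ix_{c_i})=\sum_iC_ic_i^{\,j}$. The single property I need is its behaviour under dilation: since $\mu_j(\alpha x_r)=\sum_iA_i(a_ir)^j=r^j\mu_j(\alpha)$, each $\mu_j$ is a dilation eigenvector. Because $j\ge1$, these functionals ignore any $x_0$ term, so the passage from $\mathbf B$ to $\mathbf A$ leaves the relevant moments unchanged.

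Now I would simply read off the moments from the Vandermonde conditions. For $\alpha$ of order $n$ we have $\mu_j(\alpha)=\sum_iA_ia_i^{\,j}=n!\,\delta_{jn}$ for $j=0,1,\dots,n$; in particular $\mu_j(\alpha)=0$ for every $j$ with $1\le j\le n-1$. For $\beta$ of order $m$ the top Vandermonde equation gives $\mu_m(\beta)=\sum_iB_ib_i^{\,m}=m!\ne 0$. Suppose, for contradiction, that $\beta\in(\alpha)$; since $(\alpha)$ is the $k$-span of the dilates $\alpha_r=\alpha x_r$, we may write $\beta=\sum_rc_r\alpha_r$ as a finite sum. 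Applying $\mu_m$ and using linearity and the dilation rule,
\[
\mu_m(\beta)=\sum_rc_r\,\mu_m(\alpha_r)=\sum_rc_r\,r^m\mu_m(\alpha)=0,
\]
since the hypothesis $1\le m\le n-1$ forces $\mu_m(\alpha)=0$. This contradicts $\mu_m(\beta)=m!\ne0$, so $\beta\notin(\alpha)$, which completes the argument.

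The step I expect to require the most care is the reduction in the first paragraph: one must check that the machinery of Theorem 5 and Corollary 1, although stated for two differences of the same order, carries over verbatim when the orders differ. The point is that equality of orders is never used in those constructions beyond the final division; the numerator $\Delta_{\mathcal A}f(0,h)$ is identically zero, and the divergence of $\Delta_{\mathcal B}f(0,s_\ell)/s_\ell^{\,m}$ needs only $m\ge1$. Once this is granted, the moment computation above is routine and finishes the proof.
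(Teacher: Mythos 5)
Your proof is correct and takes essentially the same route as the paper's: the paper also observes that the $m$th Vandermonde conditions force $\sum_i A_ia_i^{m}=0$ while $\sum_i B_ib_i^{m}=m!\neq 0$, concludes that $\Delta_{\mathcal{B}}$ cannot be a linear combination of dilates of $\Delta_{\mathcal{A}}$ (your moment functional $\mu_m$ just makes the dilation eigenvalue $r^{m}$ explicit), and then invokes the construction from the proof of Theorem 5 to produce $f$. Your added care about the $x_0$ term and about Theorem 5 being stated for equal orders is a reasonable elaboration of what the paper leaves implicit.
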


\begin{proof}
Let $\mathcal{A=}\left\{ A_{1},\dots ;a_{1},\dots \right\} $ and $\mathcal{B=%
}\left\{ B_{1},\dots ;b_{1},\dots \right\} $. Since $m<n$, the $m$th Vandermonde conditions
force $\sum A_{i}a_{i}^{m}=0$ and $\sum B_{i}b_{i}^{m}=m!$. Consequently $%
\Delta _{\mathcal{B}}$ cannot be a linear combination of dilates of $\Delta
_{\mathcal{A}}$. Just as in the proof of Theorem 5 above, this leads to the
existence of the desired function $f$.
\end{proof}

\subsection{Proof of the main results.} We are now ready to prove Theorems 2 and 3 announced in the Introduction.

\begin{proof}[Proof of Theorem 3] Example 2 rules out the case $m>n$, and Corollary 2 rules out the case $m<n$, so $m=n$. By Theorem 5(i), $\mathcal{A}$-differentiation implies $\mathcal{B}$-differentiation is equivalent to $(\alpha )\supseteq (\beta )$. We write $\mathbf{A}=e\mathbf{A}\oplus d\mathbf{A} =\mathbf{A}^{\epsilon }\oplus \mathbf{A}^{\epsilon '}$, by (\ref{eA+dA}). The same relation yields $(\alpha )=\alpha \mathbf{A}=\alpha (e+d)\mathbf{A}=\alpha e\mathbf{A}\oplus \alpha d\mathbf{A}=(\alpha e)\oplus (\alpha d) =(\alpha^{\epsilon })\oplus (\alpha^{\epsilon '})$. A similar expression holds for $ (\beta )$. Basic ideal theory in direct sums of algebras makes the inclusion $(\alpha )\supseteq (\beta )$ equivalent to
both $(\alpha^{\epsilon })\supseteq (\beta^{\epsilon })$ and $(\alpha^{\epsilon '})\supseteq (\beta^{\epsilon '})$. These are clearly equivalent to the two desired equations.
\end{proof}

\begin{proof}[Proof of Theorem 2] The equality $m=n$ follows from Theorem 3. By Theorem 5, ``$\mathcal{A}$-differentiable $\Longleftrightarrow $ $\mathcal{B}$-differentiable'' is equivalent to ``$(\alpha )=(\beta )$'', that is to $\beta =u\alpha $, for some invertible element $u\in \mathbf{A}$.
By (AB7), we write $u=Ae_r+Bd_s$,
for $0\neq A,B\in k$ and $r,s>0$. Consequently, $\beta =Ae_r\alpha +Bd_s\alpha =Ax_re\alpha +Bx_sd\alpha $. Uniqueness of writing of $\beta $ and $\alpha $ as sums of components makes $\beta^{\epsilon }$ and $\beta^{\epsilon '}$
scalar multiples of dilates of $\alpha^{\epsilon }$ and $\alpha^{\epsilon '}$, respectively. Moreover, the equation $m=n$ makes both $\alpha $ and $\beta $ correspond to $n$th generalized Riemann differences and, by Theorem 4(i), the same is true for $\beta^{\epsilon }$ and $\alpha^{\epsilon }$. We conclude that $\Delta _{\mathcal{B}}^{\epsilon }f(x,h)$ is a scaling of $\Delta _{\mathcal{A}}^{\epsilon }f(x,h)$ and $\Delta _{\mathcal{B}%
}^{\epsilon ^{\prime }}f(x,h)$ is a non-zero scalar multiple of a dilate of $%
\Delta _{\mathcal{A}}^{\epsilon ^{\prime }}f(x,h)$.
\end{proof}

\subsection{Two examples}
Recall from Section 1 that Example 3(iii) contained an application of Theorem 3. It provided two third generalized Riemann differences $\Delta_{\mathcal{A}}$ and $\Delta_{\mathcal{B}}$ such that $\mathcal{B}$-differentiation does not imply $\mathcal{A}$-differentiation. The next example is an application of Theorem 5. It obtains the same result in a different way and additionally shows that for the same two differences $\mathcal{A}$-differentiation does not imply $\mathcal{B}$-differentiation. Similar ring- theoretic arguments can be used for many other examples.

\begin{example} Let $\Delta_{\mathcal{A}}$ and $\Delta_{\mathcal{B}}$ be the third differences of Example 3(iii). The group algebra elements corresponding to them are
\[\alpha =d_2-2d_1\text{ and }\beta =2d_{\frac 32}-6d_{\frac 12}=2x_{\frac 12}\beta',\]
where $\beta'=d_3-3d_1$. We observe that both $\alpha $ and $\beta $ are elements of $d\mathbf{A}$ since they correspond to odd differences, and $(\beta )=(\beta' )$ since the factor $2x_{\frac 12}$ is a unit in $\mathbf{A}$.
The integers $2$ and $3$ are not rational powers of each other, so they are multiplicatively $\mathbb{Q}$-linearly independent. Take the basis elements ${\lambda_1}=\log 2$ and ${\lambda_2}=\log 3$ for $\mathbb{R}$ over $\mathbb{Q}$ as in Section 4. By (AB5) we can pass from $k\mathbb{R}^+$ to $d\mathbf{A}$, and see that the elements $d_2$ and $d_3$ are algebraically independent over $k$. Since $d_1=d$ is the identity element of $d\mathbf{A}$, so too are $\alpha $ and $\beta' $. Since $\alpha $ and $\beta' $ are not multiples of each other, the ideals
\[\alpha k[\alpha, \beta' ]=(\alpha )\cap  k[\alpha, \beta' ]\text{ and }\beta' k[\alpha, \beta' ]=(\beta' )\cap  k[\alpha, \beta' ]\]
of $k[\alpha, \beta' ]=\mathbf{A}\cap  k[\alpha, \beta' ]$ are incomparable (not included in each other), and hence the same is true about the ideals $(\alpha )$ and $(\beta' )=(\beta )$ of $\mathbf{A}$. By Theorem 5(i), neither of $\mathcal{A}$-differentiability and $\mathcal{B}$-differentiability implies the other.
\end{example}

\begin{example}
Part (iii) of Example \ref{e:2} discussed $f_{\ast }\left( x\right) $, the
first order $\mathcal{A}$-derivative with excess $e=1$ associated with the
difference quotient%
\begin{equation*}
\frac{\left( \frac{1}{2}-\tau \right) f\left( x+\left( \tau +1\right)
h\right) +2\tau f\left( x+\tau h\right) -\left( \frac{1}{2}+\tau \right)
f\left( x+\left( \tau -1\right) h\right) }{h},
\end{equation*}%
where $\tau =1/\sqrt{3}$. We will now derive the result shown there. 

The derivative $f_{\ast }$ is not equivalent to ordinary differentiation. To
see this, either invoke Theorem 1 or directly test $f_{\ast }$ at $x=0$ on
the characteristic function of $\left\{ 0\right\} $. Theorem 2 allows us to
identify all $\mathcal{A}$-derivatives that are equivalent to this one.
Decompose the quotient into even and odd parts as follows.%
\begin{eqnarray*}
&&\frac{\left( \frac{1}{2}-\tau \right) x_{\left( \tau +1\right) h}+2\tau
x_{\tau h}-\left( \frac{1}{2}+\tau \right) x_{\left( \tau -1\right) h}}{h}=
\\
&&\frac{\left( \frac{1}{2}-\tau \right) \left( e_{\left( \tau +1\right)
h}+d_{\left( \tau +1\right) h}\right) +2\tau \left( e_{\tau h}+d_{\tau
h}\right) -\left( \frac{1}{2}+\tau \right) \left( e_{\left( \tau -1\right)
h}+d_{\left( \tau -1\right) h}\right) }{h}= \\
&&\frac{\Delta ^{ev}f\left(x, h\right) }{h}+\frac{\Delta ^{odd}f\left(x, h\right) 
}{h}
\end{eqnarray*}%
where 
\begin{equation*}
\Delta ^{ev}f\left(x, h\right) =\left( \frac{1}{2}-\tau \right) e_{\left( \tau
+1\right) h}+2\tau e_{\tau h}-\left( \frac{1}{2}+\tau \right) e_{\left( \tau
-1\right) h}
\end{equation*}%
and%
\begin{equation*}
\Delta ^{odd}f\left(x, h\right) =\left( \frac{1}{2}-\tau \right) d_{\left( \tau
+1\right) h}+2\tau d_{\tau h}-\left( \frac{1}{2}+\tau \right) d_{\left( \tau
-1\right) h}.
\end{equation*}%
Now apply Theorem \ref{2} to find that the most general first order $%
\mathcal{A}$-derivative equivalent to this one is associated with%
\begin{equation*}
\frac{\Delta ^{odd}f\left(x, sh\right) }{sh}+A\frac{\Delta ^{ev}f\left(
x,rh\right) }{h}
\end{equation*}%
where $s$, $r$ and $A$ are nonzero constants.
\end{example}

\end{document}